\newcommand{\email}[1]{{\tt #1}}
\newcommand{\R}{\mathbb{R}}
\newcommand{\oR}{\overline{\R}}%{\R\cup\{\infty\}}%
\newcommand{\norm}[1]{\|#1\|}
\newcommand{\Norm}[1]{\left\|#1\right\|}
\newcommand{\dist}[1]{{\rm dist}(#1)}
\newcommand{\mv}{\,\mid\,}
\newcommand{\B}{{\cal B}}
\newcommand{\I}{{\cal I}}
\newcommand{\Sp}{{\mathcal S}}
\newcommand{\Z}{{\cal Z}}
\newcommand{\setto}[1]{\mathop{\rightarrow}\limits^#1}
\newcommand{\attto}[1]{\mathop{\rightarrow}\limits_#1}
\newcommand{\longsetto}[1]{\mathop{\longrightarrow}\limits^{#1}}
\newcommand{\attconv}[1]{\mathop{\longrightarrow}\limits_{#1}^{\gph \partial #1}}
\newcommand{\skalp}[1]{\langle #1\rangle}
\newcommand{\xb}{\bar x}
\newcommand{\yb}{\bar y}
\newcommand{\zb}{\bar z}
\newcommand{\ub}{\bar u}
\newcommand{\AT}[2]{{\textstyle{#1\atop#2}}}
\newcommand{\xba}{{\bar x^\ast}}
\newcommand{\oo}{o}
\newcommand{\OO}{{\cal O}}
\newcommand{\bd}{{\rm bd\,}}
\newcommand{\co}{{\rm conv\,}}
\newcommand{\gph}{\mathrm{gph}\,}
\newcommand{\dom}{\mathrm{dom}\,}
\newcommand{\tto}{\rightrightarrows}
\newcommand{\onabla}{\overline\nabla}
\newcommand{\Limsup}{\mathop{{\rm Lim}\,{\rm sup}}}
\newcommand{\ssstar}{semismooth$^{*}$ }
\newcommand{\rge}{{\rm rge\;}}
\newcommand{\Ta}{\mathcal{T}^\varphi_\epsilon}
\newlength{\myAlgBox}
\newtheorem{theorem}{Theorem}[section]
\newtheorem{proposition}[theorem]{Proposition}
\newtheorem{remark}[theorem]{Remark}
\newtheorem{lemma}[theorem]{Lemma}
\newtheorem{corollary}[theorem]{Corollary}
\newtheorem{definition}[theorem]{Definition}
\title{On strict proto-differentiability of set-valued mappings}
\author{Helmut Gfrerer\thanks{Johann Radon Institute for  Computational and Applied Mathematics (RICAM), Altenbergerstr. 69, A-4040 Linz, Austria; \email{helmut.gfrerer@ricam.oeaw.ac.at}}
}
\date{}
\begin{document}
\maketitle

{\footnotesize
\noindent{\bf Abstract.} We will show that a multifunction is strictly proto-differentiable at a point of its graph if and only if it is graphically strictly differentiable, i.e., the graph of the multifunction locally coincides, up to a change of coordinates, with the graph of a single-valued mapping, which is strictly differentiable at the transformed reference point. This result allows point-based characterizations of strict proto-differentiability in terms of various generalized derivatives. Further we will prove that under strict proto-differentiability the properties of strong metric regularity, metric regularity and strong metric subregularity are equivalent. Finally, under strict proto-differentiability of the subgradient mapping, we provide a novel second-order relation between function values and subgradients for prox-regular functions which constitutes a nonsmooth extension of the trapezoidal rule of numerical integration.
\\
{\bf Key words.} Strict proto-differentiability, strict differentiability, graphically Lipschitzian, generalized derivatives, strong metric regularity.
\\
{\bf AMS Subject classification.} 49J53, 90C31.
}

\section{Introduction}

The notion of strict proto-differentiability was introduced by Poliquin and Rockafellar \cite{PolRo96} in their study of the subgradient mapping of prox-regular functions. It was shown in \cite{PolRo96} that for a prox-regular and subdifferentially continuous function $\varphi:\R^n\to(-\infty,\infty]$ the graph of the subgradient mapping $\partial \varphi$ locally coincides, up to a linear transformation of the coordinates, with the graph of the proximal mapping which is a  single-valued  and locally Lipschitzian mapping in this case. Then it was shown in \cite{PolRo96b} that the subgradient mapping for such functions is strictly proto-differentiable if and only if the proximal mapping is strictly differentiable. If the assumption of subdifferential continuity is omitted, the stated properties still hold for some $\varphi$-attentive $\epsilon$-localization of $\partial \varphi$.

Since the  introduction of strict proto-differentiability in \cite{PolRo96}, most contributions deal only with subgradient mappings for special function classes and its relation with strict twice epi-differentiability, see, e.g., \cite{HaJuSa22,HaSa23a,HaSa23} for some  recent papers. Very little investigations have been done for general multifunctions. In the very recent paper \cite{GfrOut24a} it is shown that for a graphically Lipschitzian and SCD-semismooth$^*$ multifunction, the set of all points belonging to its graph where strict proto-differentiability fails to hold, is negligible with respect to some Hausdorff measure.

Strict proto-differentiability has its roots in the notion of strictly smooth sets as introduced by Rockafellar \cite{Ro85}. We will show that a strictly smooth set coincides locally around the reference point, up to a permutation of the coordinates, with the graph of a strictly differentiable single-valued function, which implies in particular that the set is a Lipschitz manifold in the sense of \cite{Ro85}. This property carries over to strict proto-differentiability of multifunctions and allows its characterization by several generalized derivatives together with the graphical Lipschitz property. These characterizations can be used to show that under strict proto-differentiability and some condition on the dimension of the graphic Lipschitz property, the properties of strong metric regularity, metric regularity and strong metric subregularity are equivalent.

We will also consider the special case of the subgradient mapping of prox-regular functions. Besides various characterizations of strict proto-differentiability by means of $\varphi$-attentive generalized derivatives as  introduced in the very recent paper \cite{Gfr24b}, we also provide a new second-order relationship between function values and subgradients.

The paper is organized in the following way. In Section 2, which is divided into several subsections, we provide all the necessary definitions together with some easy consequences and well-known results. In Section 3 we state the characterizations of strictly smooth sets and strict proto-differentiability of mappings together with the equivalences between various regularity properties. The final Section 4 is devoted to the subgradient mapping of prox-regular functions.

Given a set-valued mapping $F:\R^n\tto\R^m$, we denote its domain by $\dom F:=\{x\in\R^n\mv F(x)\not=\emptyset\}$ and its graph by $\gph F:=\{(x,y)\in\R^n\times\R^m\mv y\in F(x)\}$. The space of real $m\times n$ matrices is denoted by $\R^{m\times n}$ and for a nonsingular matrix $A\in\R^{n\times n}$ we define $A^{-T}:=(A^{-1})^T$. For $A\in \R^{n\times d}$, $B\in \R^{m\times d}$ we denote by
\[\rge A:=\{Ap\mv p\in\R^d\},\quad \rge(A,B):=\{(Ap,Bp)\mv p\in\R^d\}\]
the subspaces of $\R^n$ and $\R^n\times\R^m$ generated by the ranges of $A$ and $(A,B)$, respectively. The dimension of a subspace $L$ of a finite-dimensional Hilbert space is denoted by $\dim L$ and $L^\perp$ signifies its orthogonal complement. Given a differentiable mapping $f:\R^n\to\R^m$, we write $\nabla f(x)$ for the Jacobian of $f$ at $x$. Hence, in case of a function ($m=1$) $\nabla f(x)$ is a row vector. The open ball with radius $r>0$ around a point $x\in\R^n$ is denoted by $\B_r(x)$. The polar cone to a set $A\subset\R^n$ is denoted by $A^\circ$. Given a parameterized family $C_t$ of subsets of a metric space, where $t$ also belongs to a metric space, we define the upper (outer) and lower (inner) limits in the sense of Painlev\'e--Kuratowski as
\begin{align*}
\limsup_{t\to\bar t}C_t:=\{x\mv \liminf_{t\to\bar t}\,\dist{x,C_t}=0\},\
\liminf_{t\to\bar t}C_t:=\{x\mv \limsup_{t\to\bar t}\,\dist{x,C_t}=0\}.
\end{align*}

\section{Basic definitions and preliminary results}

\subsection{Variational geometry and strictly smooth sets}

\begin{definition} Consider a set $\Omega\subset\R^n$ and a point $\zb\in\Omega$.
\begin{enumerate}
\item The {\em tangent cone}, the {\em regular (Clarke) tangent cone} and the {\em paratingent cone}  to $\Omega$ at $\zb$ are given by \vspace{-1em}
\begin{gather*}
T_\Omega(\zb):=\limsup_{t\downarrow 0}\frac{\Omega-\zb}t,\quad
\widehat T_\Omega(\zb):=\liminf_{\AT{z\setto{\Omega}\zb}{t\downarrow 0}}\frac{\Omega-z}t,\quad
T^P_\Omega(\zb):= \limsup_{\AT{z\setto{\Omega}\zb}{t\downarrow 0}}\frac{\Omega-z}t.
\end{gather*}

The set $\Omega$ is called {\em geometrically derivable} at $\zb$ if $T_\Omega =\lim_{t\downarrow 0}(\Omega-\zb)/t$
and $\Omega$ is said to be {\em strictly smooth} at $\zb$ if
\[\lim_{\AT{z\setto{\Omega}\zb}{t\downarrow 0}}\frac{\Omega-z}t\]
exists, i.e., when $\widehat T_\Omega(\zb)=T^P_\Omega(\zb)$.
\item  The {\em regular normal cone} and the {\em limiting normal cone} to $\Omega$ at $\zb$ are given by
\begin{gather*}
\widehat N_\Omega(\zb):=\big(T_\Omega(\zb)\big)^\circ,\quad
N_\Omega(\zb):=\limsup_{z\setto{\Omega}\zb}\widehat N_\Omega(z).
\end{gather*}
The set $\Omega$ is called {\em normally regular} at $\zb$ if $\widehat N_\Omega(\zb)=N_\Omega(\zb)$.
\end{enumerate}
\end{definition}
It is easy to see that the inclusions $\widehat T_\Omega(\zb)\subset T_\Omega(\zb)\subset T^P_\Omega(\zb)$
always hold.
Further, $\widehat T_\Omega(\zb)$ is a closed convex cone and $T^P_\Omega(\zb)$ is a closed cone having the property
$T^P_\Omega(\zb)=-T^P_\Omega(\zb)$.
Thus, $\Omega$ is strictly smooth at $\zb$ if and only if
\begin{equation}\label{EqStrictSmoothPrim}\widehat T_\Omega(\zb)=T^P_\Omega(\zb)(=T_\Omega(\zb))\end{equation}
is a subspace.

Clearly, $\widehat N_\Omega(\zb)\subset N_\Omega(\zb)$. Further, if $\Omega$ is locally closed at $\zb$ then the tangent-normal relation
\begin{equation}\label{EqTangNormal}
  \widehat T_\Omega(\zb)=N_\Omega(\zb)^\circ
\end{equation}
holds, cf. \cite[Theorem 6.28]{RoWe98}.
\begin{lemma}\label{LemStrictSmooth}
  Assume that the set $\Omega\subset\R^n$ is locally closed at $\zb$. Then $\Omega$ is strictly smooth at $\zb$ if and only if both $T^P_\Omega(\zb)$ and $N_\Omega(\zb)$ are subspaces satisfying
  \begin{equation}\label{EqPolarityStrictlySmooth}T^P_\Omega(\zb)^\perp=N_\Omega(\zb).\end{equation}
  Moreover, if $\Omega$ is strictly smooth at $\zb$  then $\Omega$ is normally regular at $\zb$.
\end{lemma}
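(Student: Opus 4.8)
The plan is to reduce the equivalence to the tangent--normal relation \eqref{EqTangNormal} together with elementary polarity calculus. Throughout I abbreviate $\widehat T:=\widehat T_\Omega(\zb)$, $T:=T_\Omega(\zb)$, $T^P:=T^P_\Omega(\zb)$, $\widehat N:=\widehat N_\Omega(\zb)$ and $N:=N_\Omega(\zb)$, and I will repeatedly invoke the facts recorded above: $\widehat T\subset T\subset T^P$, $\widehat N\subset N$, the definition $\widehat N=T^\circ$, and the relation $\widehat T=N^\circ$, which holds because $\Omega$ is locally closed at $\zb$. I also use that for a subspace $L$ one has $L^\circ=L^\perp$ and $(L^\perp)^\perp=L$, and that for any cone $K$ the bipolar identity $K^{\circ\circ}=\cl\co K$ holds.

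For the ``if'' direction, suppose $T^P$ and $N$ are subspaces with $(T^P)^\perp=N$. Since $N$ is a subspace, the tangent--normal relation yields $\widehat T=N^\circ=N^\perp$, and since $T^P$ is a subspace with $(T^P)^\perp=N$, I obtain $N^\perp=\big((T^P)^\perp\big)^\perp=T^P$. Hence $\widehat T=T^P$, which by \eqref{EqStrictSmoothPrim} is precisely strict smoothness.

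The converse, which will also give the ``moreover'' claim, is where the actual work lies. If $\Omega$ is strictly smooth then by \eqref{EqStrictSmoothPrim} the common cone $L:=\widehat T=T=T^P$ is a subspace, so $T^P$ is a subspace as required. From the definition I get $\widehat N=T^\circ=L^\circ=L^\perp$, and, taking polars in $\widehat T=N^\circ$, I get $\cl\co N=N^{\circ\circ}=(\widehat T)^\circ=L^\circ=L^\perp$; thus I know both ends of the chain $\widehat N\subset N\subset\cl\co N$. The main obstacle is that the bipolar computation only controls the closed convex hull of $N$, not $N$ itself; I resolve it by the sandwich $L^\perp=\widehat N\subset N\subset\cl\co N=L^\perp$, which forces $N=\widehat N=L^\perp=(T^P)^\perp$. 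This simultaneously shows that $N$ is the subspace $(T^P)^\perp$, completing the polarity condition, and that $\widehat N=N$, i.e.\ normal regularity.

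Beyond this bipolar squeeze I anticipate no genuine difficulty; the only points needing care are that both the tangent--normal relation and the validity of the polarity steps rest on the standing hypothesis that $\Omega$ is locally closed at $\zb$, and that one keeps careful track of where the convexity and subspace structure are actually being used (in particular that $L^\circ=L^\perp$ is available only because strict smoothness upgrades $L$ from a cone to a subspace).
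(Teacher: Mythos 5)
Your proof is correct and follows essentially the same route as the paper's: the ``if'' direction by polarizing \eqref{EqPolarityStrictlySmooth} through the tangent--normal relation \eqref{EqTangNormal}, and the ``only if'' direction (together with normal regularity) via the same bipolar squeeze $T^P_\Omega(\zb)^\perp=\widehat N_\Omega(\zb)\subset N_\Omega(\zb)\subset\cl\co N_\Omega(\zb)=T^P_\Omega(\zb)^\perp$. No discrepancies to report.
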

\begin{proof} If $\Omega$ is strictly smooth,  then $T_\Omega^P(\zb)$ is a subspace and, by polarizing \eqref{EqStrictSmoothPrim} and taking into account  \eqref{EqTangNormal}, we obtain that
  \[T^P_\Omega(\zb)^\circ=T^P_\Omega(\zb)^\perp=T_\Omega(\zb)^\circ=\widehat N_\Omega(\zb)=\widehat T_\Omega(\zb)^\circ =\co N_\Omega(\zb)\]
  is a subspace. Since $\widehat N_\Omega(\zb)\subset N_\Omega(\zb)$, we conclude that $\co N_\Omega(\zb)=N_\Omega(\zb)=\widehat N_\Omega(\zb)=T^P_\Omega(\zb)^\perp$ is a subspace and $\Omega$ is normally regular at $\zb$. Conversely, if $T^P_\Omega(\zb)$ and $N_\Omega(\zb)$ are subspaces satisfying \eqref{EqPolarityStrictlySmooth}, by polarizing \eqref{EqPolarityStrictlySmooth} and using \eqref{EqTangNormal} we obtain that $T^P_\Omega(\zb)=N_\Omega(\zb)^\perp=N_\Omega(\zb)^\circ=\widehat T_\Omega(\zb)$ showing that $\Omega$ is strictly smooth at $\zb$.
\end{proof}

\subsection{Generalized differentiation and strict proto-differentiability of set-valued mappings}
\begin{definition}\label{DefStrictProto}
  Let $F:\R^n\tto\R^m$ be a mapping and let $(\xb,\yb)\in\gph F$.
  \begin{enumerate}
    \item The {\em strict derivative} $D_*F(\xb,\yb):\R^n\tto\R^m$, the {\em graphical derivative} $DF(\xb,\yb):\R^n\tto\R^m$ and the {\em limiting (Mordukhovich) coderivative} $D^*F(\xb,\yb):\R^n\tto\R^m$ at $\xb$ for $\yb$ are given by
        \begin{gather*}\gph D_*F(\xb,\yb)=T_{\gph F}^P(\xb,\yb),\quad \gph DF(\xb,\yb)=T_{\gph F}(\xb,\yb),\\
         \gph D^*F(\xb,\yb)=\{(y^*,x^*)\mv (x^*,-y^*)\in N_{\gph F}(\xb,\yb)\}.
         \end{gather*}
         \item The mapping $F$ is called {\em graphically regular} at $(\xb,\yb)$ if $\gph F$ is normally regular at $(\xb,\yb)$.
        \item The mapping $F$ is called {\em proto-differentiable} at $\xb$ for $\yb$ if $\gph F$ is geometrically derivable at $(\xb,\yb)$.
        \item The mapping $F$ is called {\em strictly proto-differentiable} at $\xb$ for $\yb$ if $\gph F$ is strictly smooth at $(\xb,\yb)$.
  \end{enumerate}
\end{definition}
In case when $F:\R^n\to\R^m$ is a single-valued mapping the notation can be simplified by omitting the second argument, e.g., we will write $DF(\xb)$ instead of $DF(\xb,F(\xb))$, etc.

In view of \eqref{EqStrictSmoothPrim}, $F:\R^n\tto\R^m$ is  strictly proto-differentiable at $\xb$ for $\yb\in F(\xb)$ if and only if
\begin{equation}
  \label{EqStrictProtPrim} \gph D_*F(\xb,\yb)=T^P_{\gph F}(\xb,\yb)=\widehat T_{\gph F}(\xb,\yb)(=T_{\gph F}(\xb,\yb)=\gph DF(\xb,\yb))
\end{equation}
is a subspace.

The definition of strict proto-differentiability goes back to \cite{PolRo96}, where it was introduced in a somewhat different way. In \cite{PolRo96}, a set-valued mapping $F:\R^n\tto\R^m$ was defined to be strictly proto-differentiable at $\xb$ for $\yb\in F(\xb)$, if the set-valued mappings
\[\triangle_{t,x,y}F:u\mapsto \frac{F(x+tu)-y}t\]
graphically converge if $t\downarrow 0$ and $(x,y)\longsetto{\gph F}(\xb,\yb)$. By substituting $x'=x+tu$ we see that
\[\gph \triangle_{t,x,y}F =\{\frac{(x'-x,y'-y)}t\mv (x',y')\in\gph F\}=\frac{\gph F-(x,y)}t\]
and hence this definition of strict proto-differentiability from \cite{PolRo96} coincides with the one of Definition \ref{DefStrictProto}. However, there is also another definition of strict proto-differentiability by Rockafellar \cite{Ro89}, which is equivalent to the condition $T_{\gph F}(\xb,\yb)=\widehat T_{\gph F}(\xb,\yb)$, cf. \cite[Proposition 2.7]{Ro89}. In view of \eqref{EqStrictProtPrim} we obtain that strict proto-differentiability in the sense of Definition \ref{DefStrictProto} implies this property in the sense of \cite{Ro89}, but the reverse implication does not hold in general. E.g., any mapping $F$ with closed convex graph having nonempty interior satisfies $T_{\gph F}(y,y)=\widehat T_{\gph F}(x,y)$, $(x,y)\in\gph F$, but $\widehat T_{\gph F}(x,y)$ is not a subspace for $(x,y)\in\bd\gph F$. A more subtle example is given by the mapping $F(x):=\{\pm x^2\}$, $x\in\R$ at $(0,0)$. Then easy calculations show that  $T_{\gph F}(0,0)=\widehat T_{\gph F}(0,0)$ equals to the subspace $\R\times\{0\}$, but $T^P_{\gph F}(0,0)=\R^2$.

 We now want to state the counterpart of Lemma \ref{LemStrictSmooth} for strictly proto-differentiable mappings.

Defining the $(n+m)\times(n+m)$ orthogonal matrix
\[S_{nm}:=\begin{pmatrix}0&-I_m\\I_n&0\end{pmatrix},\]
the graph of the limiting coderivative can also be expressed as
\[\gph D^*F(\xb,\yb)=S_{nm}N_{\gph F}(\xb,\yb).\]

\begin{lemma}\label{LemStrictProto}
  Let $F:\R^n\tto\R^m$ be a mapping having locally closed graph at $(\xb,\yb)\in\gph F$. Then $F$ is strictly proto-differentiable at $\xb$ for $\yb$ if and only if both $\gph D_*F(\xb,\yb)$ and $\gph D^*F(\xb,\yb)$ are subspaces satisfying
  \begin{equation*}%\label{EqLemStrProto}
  \gph D^*F(\xb,\yb)=\{(y^*,x^*)\mv (x^*,-y^*)\in\gph D_*F(\xb,\yb)^\perp\}.
  \end{equation*}
  Moreover, in this case $F$ is graphically regular at $(\xb,\yb)$.
\end{lemma}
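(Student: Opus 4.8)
The plan is to reduce the entire statement to Lemma \ref{LemStrictSmooth} applied to the set $\Omega:=\gph F$ at the point $\zb:=(\xb,\yb)$. By Definition \ref{DefStrictProto}, $F$ is strictly proto-differentiable at $\xb$ for $\yb$ precisely when $\gph F$ is strictly smooth at $(\xb,\yb)$, and $F$ is graphically regular there precisely when $\gph F$ is normally regular there. Since $\gph F$ is locally closed at $(\xb,\yb)$ by hypothesis, Lemma \ref{LemStrictSmooth} applies and tells us that strict smoothness of $\gph F$ is equivalent to both $T^P_{\gph F}(\xb,\yb)$ and $N_{\gph F}(\xb,\yb)$ being subspaces with $T^P_{\gph F}(\xb,\yb)^\perp=N_{\gph F}(\xb,\yb)$, and that strict smoothness already forces normal regularity. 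It therefore only remains to rewrite these two subspace conditions and the polarity relation in the language of the strict derivative and the limiting coderivative.

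First I would invoke the two identities from Definition \ref{DefStrictProto}, namely $\gph D_*F(\xb,\yb)=T^P_{\gph F}(\xb,\yb)$ and $\gph D^*F(\xb,\yb)=S_{nm}N_{\gph F}(\xb,\yb)$. The first makes the condition ``$\gph D_*F(\xb,\yb)$ is a subspace'' literally identical to ``$T^P_{\gph F}(\xb,\yb)$ is a subspace''. For the second, the key point is that $S_{nm}$ is an orthogonal, hence invertible linear, matrix; it therefore carries subspaces to subspaces bijectively, so $\gph D^*F(\xb,\yb)=S_{nm}N_{\gph F}(\xb,\yb)$ is a subspace if and only if $N_{\gph F}(\xb,\yb)$ is one.

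It then remains to match the polarity relations. Starting from $N_{\gph F}(\xb,\yb)=T^P_{\gph F}(\xb,\yb)^\perp=\gph D_*F(\xb,\yb)^\perp$ and applying $S_{nm}$ to both sides, I would obtain $\gph D^*F(\xb,\yb)=S_{nm}\big(\gph D_*F(\xb,\yb)^\perp\big)$; unwinding the action of $S_{nm}$ on a pair, which sends $(x^*,-y^*)\mapsto(y^*,x^*)$, rewrites the right-hand side as exactly $\{(y^*,x^*)\mv(x^*,-y^*)\in\gph D_*F(\xb,\yb)^\perp\}$, the asserted relation. Conversely, the same substitution applied to the coderivative relation recovers $N_{\gph F}(\xb,\yb)=\gph D_*F(\xb,\yb)^\perp=T^P_{\gph F}(\xb,\yb)^\perp$, so the two formulations are equivalent. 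The ``moreover'' assertion on graphical regularity transfers immediately from the corresponding conclusion of Lemma \ref{LemStrictSmooth}.

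I expect no genuine difficulty here, since the proof is a faithful dictionary translation of Lemma \ref{LemStrictSmooth}. The only point requiring a little care is to check that the orthogonal change of coordinates $S_{nm}$ simultaneously preserves the subspace property and intertwines orthogonal complementation with the polarity relation defining the coderivative, i.e. that $(S_{nm}L)^\perp=S_{nm}(L^\perp)$ for every subspace $L$, which is immediate from the orthogonality of $S_{nm}$.
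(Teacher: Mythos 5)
Your proposal is correct and follows essentially the same route as the paper: both reduce the statement to Lemma \ref{LemStrictSmooth} applied to $\Omega=\gph F$ and then translate the conditions on $T^P_{\gph F}(\xb,\yb)$ and $N_{\gph F}(\xb,\yb)$ into statements about $\gph D_*F(\xb,\yb)$ and $\gph D^*F(\xb,\yb)$ via the orthogonal matrix $S_{nm}$. Your explicit remark that $S_{nm}$, being orthogonal, preserves the subspace property and commutes with orthogonal complementation is exactly the (implicit) justification the paper relies on.
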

\begin{proof}
  By Lemma \ref{LemStrictSmooth}, $\gph F$ is strictly smooth at $(\xb,\yb)$ if and only if $T^P_{\gph F}(\xb,\yb)\big(=\gph D_*F(\xb,\yb)\big)$ and $N_{\gph F}(\xb,\yb)$ are subspaces satisfying
  $N_{\gph F}(\xb,\yb)=\gph D_*F(\xb,\yb)^\perp$, which in turn is equivalent to the condition that $\gph D_*F(\xb,\yb)$ and $\gph D^*F(\xb,\yb)=S_{nm}N_{\gph F}(\xb,\yb)$ are subspaces and
  \[\gph D^*F(\xb,\yb)=S_{nm}N_{\gph F}(\xb,\yb)=S_{nm}T^P_{\gph F}(\xb,\yb)^\perp=\{(y^*,x^*)\mv (x^*,-y^*)\in\gph D_*F(\xb,\yb)^\perp\}.\]
  From Lemma \ref{LemStrictSmooth} we may also conclude that $F$ is graphically regular at $(\xb,\yb)$.
\end{proof}

So far we have only considered generalized derivatives well-known  from the literature. We now call  attention to other generalized derivatives, the so-called SC (subspace containing) derivatives, which were very recently introduced in \cite{GfrOut22}. Though the development of SC derivatives was mainly motivated by an efficient implementation of the semismooth$*$ Newton method for solving generalized equations from \cite{GfrOut21}, they also were useful for analyzing various theoretical topics in variational analysis, cf. \cite{GfrOut23, GfrOut24a, Gfr24b}.

Let $\tilde \Z_{nm}$  denote the metric space of all subspaces of $\R^n\times \R^m$ equipped with the metric
\[d_\Z(L_1,L_2)=\norm{P_{L_1}-P_{L_2}},\]
where $P_{L_i}$, $i=1,2$, denotes the orthogonal projection on $L_i$. Further, we denote by $\Z_{nm}$ the metric space of all subspaces of $\R^n\times\R^m$ of dimension $n$ equipped with the metric $d_\Z$. By \cite[Exercise 5.35]{RoWe98}, a sequence $L_k\in \tilde \Z_{nm}$ converges to some subspace $L\in \tilde \Z_{nm}$ with respect to $d_\Z$, i.e.., $\lim_{k\to\infty}d_\Z(L_k,L)=0$, if and only if $L_k$ converges to $L$ in the sense of set convergence. In this case we must have $\dim L =\dim L_k$ for all $k$ sufficiently large and hence $\Z_{nm}$ is closed in $\tilde\Z_{nm}$. Since orthogonal projections are uniformly bounded, both $\tilde \Z_{nm}$ and $\Z_{nm}$ are compact.

Given a subspace $L\in \tilde \Z_{nm}$, we denote by
\[L^*:=S_{nm}L^\perp =\{(y^*,x^*)\in\R^m\times\R^n\mv (x^*,-y^*)\in L^\perp\}\in\tilde \Z_{mn}.\]
its {\em adjoint }subspace. Then $(L^*)^*=L$ and $d_\Z(L_1,L_2)=d_\Z(L_1^*,L_2^*)$, cf. \cite{GfrOut22}. Since $\dim L^*=\dim L^\perp = n+m-\dim L$, we have $L^*\in\Z_{mn}$ whenever $L\in\Z_{nm}$.

\begin{definition}\label{DefSCD}Let $F:\R^n\tto\R^m$ be a mapping.
  \begin{enumerate}
    \item $F$ is called {\em graphically smooth} at $(x,y)\in\gph F$ and of dimension $d$ in this respect, if $T_{\gph F}(x,y)$ is a $d$-dimensional subspace of $\R^n\times\R^m$. Further we denote by $\tilde \OO_F$ the set of all points from the graph of $F$, where $F$ is graphically smooth. The set $\OO_F$ is defined as the subset of $\tilde\OO_F$ where $F$ is graphically smooth of dimension $n$.
    \item The  SC derivative $\Sp F:\gph F\tto  \Z_{nm}$ and the generalized SC derivative $\tilde \Sp F:\gph F\tto \tilde \Z_{nm}$ are defined by
    \begin{gather*}
      \Sp F(x,y):=\{L\in \Z_{nm}\mv \exists (x_k,y_k)\longsetto {\OO_F}(x,y): d_\Z(T_{\gph F}(x_k,y_k),L)=0\},\\
      \tilde \Sp F(x,y):=\{L\in\tilde \Z_{nm}\mv \exists (x_k,y_k)\longsetto {\tilde \OO_F}(x,y): d_\Z(T_{\gph F}(x_k,y_k),L)=0\},
    \end{gather*}
    whereas the adjoint SC derivatives   $\Sp^*F:\gph F\tto \Z_{mn}$ and $\tilde \Sp^*F:\gph F\tto\tilde \Z_{mn}$ are given by
    \[\Sp^*F(x,y):=\{L^*\mv L\in\Sp F(x,y)\},\quad\tilde \Sp^*F(x,y):=\{L^*\mv L\in\tilde \Sp F(x,y)\}.\]
    \item We say that $F$ has the {\em  SCD (subspace containing derivative) property} and {\em generalized SCD property at} $(\xb,\yb)\in\gph F$  if $\Sp F(\xb,\yb)\not=\emptyset$ and $\tilde \Sp F(\xb,\yb)\not=\emptyset$, respectively.\\
        We say that $F$ has the {\em (generalized) SCD property} {\em around}  $(\xb,\yb)\in\gph F$, if $F$ has this property at all points $(x,y)\in\gph F$ sufficiently close to $(\xb,\yb)$.\\
        Finally we say that $F$ is a (generalized) SCD mapping if $\Sp F(x,y)\not=\emptyset$, $(x,y)\in\gph F$ ($\tilde \Sp F(x,y)\not=\emptyset$, $(x,y)\in\gph F$).
  \end{enumerate}
\end{definition}

\begin{remark}
  The SC derivatives $\Sp F, \Sp^*F$ were introduced in \cite{GfrOut22, GfrOut23}, whereas the definitions of their generalized counterparts are new. Many of the properties of $\Sp F$ and
  $\Sp^*F$ established in \cite{GfrOut22, GfrOut23} carry easily over to $\tilde \Sp F$ and $\tilde \Sp^*F$ and we will use these properties without proving them again. Let us mention that for many mappings $F$ important in practice there holds $\tilde \Sp F =\Sp F$. We will discuss this feature more in detail below.
\end{remark}

Obviously the inclusion $\Sp F(x,y)\subset\tilde \Sp F(x,y)$ holds. One also has  the following relation between the SC derivative and the strict derivative and limiting coderivative, cf. \cite{GfrOut22}: For every $L\in\tilde \Sp F(x,y)$ there holds
\begin{equation}\label{EqInclSCD}L\subset \gph D_*F(x,y),\quad L^*\subset \gph D^*F(x,y).\end{equation}
 The generalized (adjoint) derivative may be therefore considered as a kind of skeleton for the strict derivative (limiting coderivative).

 \subsection{On strict differentiability of single-valued mappings}
 First we  collect some basic facts about single-valued mappings.

 \begin{lemma}\label{LemDimSubspSingleValued}
 Consider a mapping $F:U\to\R^m$, where $U\subset\R^n$ is open, and let $\xb\in U$.
 \begin{enumerate}
   \item[(i)] If $F$ is strictly continuous at $\xb$ and $\gph D_*F(\xb)$ is a subspace, then $\dim \gph D_*F(\xb)=n$ and $D_*F(\xb)$ is a linear mapping from $\R^n$ to $\R^m$.
   \item[(ii)] If $F$ is strictly continuous at $\xb$ and $\gph D^*F(\xb)$ is a subspace, then $\dim \gph D^*F(\xb)=m$ and $D^*F(\xb)$ is a linear mapping from $\R^m$ to $\R^n$.
   \item[(iii)] If $F$ is calm at $\xb$ and $\gph DF(\xb)$ is a subspace, then $\dim \gph DF(\xb)=n$ and $DF(\xb)$ is a linear mapping from $\R^n$ to $\R^m$.
 \end{enumerate}
 \end{lemma}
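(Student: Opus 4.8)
The plan is to handle all three parts with a single template: in each case the hypothesis provides a subspace $L$ (namely $\gph D_*F(\xb)$, $\gph D^*F(\xb)$, or $\gph DF(\xb)$), and I will show that the coordinate projection of $L$ onto the appropriate factor is a \emph{linear bijection}. A linear isomorphism $\pi|_L:L\to\R^k$ forces $\dim L=k$ and exhibits $L$ as the graph of the single-valued linear map $A=\pi'\circ(\pi|_L)^{-1}$, where $\pi'$ is the complementary projection; this is exactly the asserted conclusion. Injectivity of $\pi|_L$ will always come from a Lipschitz-type norm bound, and surjectivity from a construction (or nonemptiness) of approximating elements.

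For part (i), write $\yb=F(\xb)$ and let $\kappa$ be a Lipschitz modulus of $F$ on a neighborhood of $\xb$; the relevant projection is $(u,v)\mapsto u$ on $\gph D_*F(\xb)=T^P_{\gph F}(\xb,\yb)$. Any $(u,v)\in T^P_{\gph F}(\xb,\yb)$ is a limit of difference quotients $v_k=\big(F(x_k+t_ku_k)-F(x_k)\big)/t_k$ with $x_k\to\xb$, $t_k\downarrow 0$, $(u_k,v_k)\to(u,v)$, so $\|v_k\|\le\kappa\|u_k\|$ and hence $\|v\|\le\kappa\|u\|$. In particular any element $(0,v)$ of the subspace $\gph D_*F(\xb)$ has $v=0$, so the linear projection has trivial kernel. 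Surjectivity follows from $T_{\gph F}(\xb,\yb)\subset T^P_{\gph F}(\xb,\yb)$: for arbitrary $u$ the quotients $\big(F(\xb+t_ku)-F(\xb)\big)/t_k$ are bounded by $\kappa\|u\|$, so a subsequence converges to some $v$ with $(u,v)\in T_{\gph F}(\xb,\yb)\subset\gph D_*F(\xb)$.

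Part (iii) is the same two-step argument applied to $\gph DF(\xb)=T_{\gph F}(\xb,\yb)$, except that the difference quotients now compare $F$ to its value at the \emph{fixed} base point $\xb$ rather than at a moving point $x_k$. This is precisely why calmness — which bounds only $\|F(x)-F(\xb)\|$ by $\kappa\|x-\xb\|$ — already suffices here, while in (i) the two moving arguments $x_k$ and $x_k+t_ku_k$ appearing in the paratingent cone genuinely require strict continuity. For part (ii) the projection is taken onto the $y^*$-factor $\R^m$ of $\gph D^*F(\xb)$, and I would import the two standard facts about the limiting coderivative of a strictly continuous map of modulus $\kappa$: the norm bound $x^*\in D^*F(\xb)(y^*)\Rightarrow\|x^*\|\le\kappa\|y^*\|$, and the nonemptiness $D^*F(\xb)(y^*)\neq\emptyset$ for every $y^*$ (via the scalarization $D^*F(\xb)(y^*)=\partial\langle y^*,F\rangle(\xb)$ together with nonemptiness of the limiting subdifferential of the Lipschitz function $\langle y^*,F\rangle$), cf.\ \cite[Theorem 9.40]{RoWe98}. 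The bound gives injectivity as before and nonemptiness gives surjectivity, so again $\dim\gph D^*F(\xb)=m$ and $D^*F(\xb)$ is linear.

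The difference-quotient estimates are routine; the only ingredients that must be imported rather than derived from elementary tangent-cone manipulations are the coderivative norm bound and the nonemptiness $\dom D^*F(\xb)=\R^m$ used in part (ii), and that is the single point I expect to require an external citation.
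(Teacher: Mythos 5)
Your proposal is correct and follows essentially the same route as the paper: in each part one shows the relevant subspace contains no nonzero ``vertical'' element (via the Lipschitz/calmness bound on difference quotients in (i) and (iii), and via the coderivative criterion of \cite[Theorem 9.40]{RoWe98} in (ii)) and that its projection onto the domain factor is onto (boundedness of difference quotients in (i)/(iii), nonemptiness of $D^*F(\xb)(y^*)$ in (ii), where the paper cites \cite[Proposition 9.24(b)]{RoWe98} rather than the scalarization argument). Your packaging of these two facts as injectivity and surjectivity of the coordinate projection is just a cleaner phrasing of the identical argument.
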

 \begin{proof}
   ad (i): By strict continuity of $F$ there is some neighborhood $U'$ of $\xb$ along with some real $\kappa\geq 0$ such the $\norm{F(x)-F(x')}\leq \kappa \norm{x-x'}$, $x,x'\in U'$. By the definition of the strict derivative, there holds $(u,v)\in \gph D_*F(\xb)$ if and only if there are sequences $x_k\to\xb$, $u_k\to u$ and $t_k\downarrow 0$ such that
   $(F(x_k+t_ku_k)-F(x_k))/t_k\to v$. Since  $\norm{F(x_k+t_ku_k)-F(x_k)}/t_k\leq \kappa\norm{u_k}$, we conclude that $\norm{v}\leq \kappa\norm{u}$. Hence, $\gph D_*F(\xb)$ cannot contain elements of the form $(0,v)\in\R^n\times\R^m$ with $v\not=0$. Thus $\dim \gph D_*F(\xb)\leq n$ and for every $u\in\R^n$ there can be at most one $v\in\R^m$ with $(u,v)\in \gph D_*F(\xb)$. On the other hand, for every $u\in\R^n$ and every sequence $t_k\downarrow 0$ the sequence $(F(\xb+t_ku)-F(\xb))/t_k$ is bounded and, by possibly passing to a subsequence, we have $(F(\xb+t_ku)-F(\xb))/t_k\to v\in D_*F(\xb)(u)$. Hence $\dom D_*F(\xb)=\R^n$ implying $\dim \gph D_*F(\xb)\geq n$. We conclude that $\dim \gph D_*F(\xb)= n$ and that $D_*F(\xb)$ is a single-valued mapping from $\R^n$ to $\R^m$, which must be necessarily linear.

   ad (ii): By strict continuity of $F$ at $\xb$ together with the Mordukhovich criterion \cite[Theorem 9.40]{RoWe98} we obtain that $\gph D^*F(\xb)$ cannot contain elements $(0,u^*)\in\R^m\times\R^n$ with $u^*\not=0$. Hence, $\dim \gph D^*F(\xb)\leq m$ and for every $v^*\in\R^m$ the set $D^*F(\xb)(v^*)$ can contain at most one element. On the other hand, by \cite[Proposition 9.24(b)]{RoWe98} the coderivative $D^*F(\xb)$ is nonempty-valued and consequently $\dim \gph D^*F(\xb)=m$ and $D^*F(\xb)$ is a single-valued mapping which is linear.

   ad (iii): This can be shown analogously to (i).
 \end{proof}
 By combining Lemma \ref{LemDimSubspSingleValued}(iii) with \cite[Exercise 9.25(b)]{RoWe98} we obtain the following corollary.
 \begin{corollary}\label{CorFrechetDiff}
   A mapping $F:U\to\R^m$, where $U\subset \R^n$ is open, is Fr\'echet differentiable at $\xb\in U$, if and only if $F$ is calm at $\xb$ and $\gph DF(\xb)=T_{\gph F}(\xb,F(\xb))$ is a subspace of $\R^n\times\R^m$. Moreover, in this case there holds $\dim T_{\gph F}(\xb,F(\xb))=n$.
 \end{corollary}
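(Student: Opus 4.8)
The plan is to establish the two implications separately, using that for a single-valued $F$ the graphical derivative is defined by $\gph DF(\xb)=T_{\gph F}(\xb,F(\xb))$, so the subspace condition in the statement is literally a condition on this tangent cone, and the equality displayed in the corollary is just the specialization of Definition \ref{DefStrictProto} to the single-valued case.

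For the implication from Fr\'echet differentiability I would first note that differentiability at $\xb$ yields the estimate $\norm{F(x)-F(\xb)}\leq(\norm{\nabla F(\xb)}+1)\norm{x-\xb}$ for all $x$ sufficiently close to $\xb$, so $F$ is in particular calm at $\xb$. Next I would use the standard fact that a Fr\'echet differentiable mapping is semidifferentiable with graphical derivative $DF(\xb)=\nabla F(\xb)$, whence $T_{\gph F}(\xb,F(\xb))=\gph \nabla F(\xb)=\{(u,\nabla F(\xb)u)\mv u\in\R^n\}$. This is the graph of a linear mapping defined on all of $\R^n$, hence an $n$-dimensional subspace; this simultaneously settles the subspace claim and the dimension formula in the ``moreover'' part.

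For the converse I would invoke the preparatory results directly. Assuming that $F$ is calm at $\xb$ and that $\gph DF(\xb)$ is a subspace, Lemma \ref{LemDimSubspSingleValued}(iii) already delivers that $DF(\xb)$ is a single-valued linear mapping from $\R^n$ to $\R^m$ and that $\dim \gph DF(\xb)=n$. It then remains to upgrade linearity of the graphical derivative to genuine Fr\'echet differentiability, which is exactly the content of \cite[Exercise 9.25(b)]{RoWe98}: for a mapping that is calm at the reference point, linearity of $DF(\xb)$ is equivalent to differentiability, with $\nabla F(\xb)=DF(\xb)$. Combining the two statements yields the claim, and the dimension $n$ is inherited from Lemma \ref{LemDimSubspSingleValued}(iii).

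The only genuinely nontrivial step is this last upgrade, and it is where calmness is indispensable. On its own, the requirement that $T_{\gph F}(\xb,F(\xb))$ be a subspace is an ``outer'', one-sided condition, since the tangent cone is built from a $\limsup$ of difference quotients, and linearity of $DF(\xb)$ need not force the full two-sided, uniform convergence demanded by Fr\'echet differentiability. Calmness supplies the missing uniform bound $\norm{F(x)-F(\xb)}\leq \kappa\norm{x-\xb}$ near $\xb$; this boundedness turns the $\limsup$ defining the tangent cone into a genuine limit, thereby promoting single-valuedness of $DF(\xb)$ to semidifferentiability and, together with linearity, to differentiability. I therefore expect the careful citation (or re-verification) of \cite[Exercise 9.25(b)]{RoWe98} to be the main obstacle, the remainder being a direct assembly of Lemma \ref{LemDimSubspSingleValued}(iii) with elementary facts about differentiable maps.
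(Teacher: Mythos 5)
Your proposal is correct and follows essentially the same route as the paper, which proves the corollary in one line by combining Lemma \ref{LemDimSubspSingleValued}(iii) with \cite[Exercise 9.25(b)]{RoWe98}. Your only addition is a direct elementary verification of the forward implication, which is implicit in the cited exercise anyway.
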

 \if{
 \begin{lemma}\label{LemFrechetDiff}
   A mapping $F:U\to\R^m$, where $U\subset \R^n$ is open, is Fr\'echet differentiable at $\xb\in U$, if and only if $F$ is calm at $\xb$ and $T_{\gph F}(\xb,F(\xb))$ is a subspace of $\R^n\times\R^m$. Moreover, in this case there holds $\dim T_{\gph F}(\xb,F(\xb))=n$
 \end{lemma}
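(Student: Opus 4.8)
The plan is to prove the two implications separately, using Lemma \ref{LemDimSubspSingleValued}(iii) to pass from the subspace hypothesis on $\gph DF(\xb)$ to linearity of the graphical derivative, and then invoking the classical characterization of differentiability through the graphical derivative in \cite[Exercise 9.25(b)]{RoWe98}, which asserts that a mapping continuous at $\xb$ is Fr\'echet differentiable there if and only if $DF(\xb)$ is a single-valued linear mapping, in which case $DF(\xb)=\nabla F(\xb)$. The only preliminary observation needed to align the hypotheses of these two results is that calmness and Fr\'echet differentiability both supply the standing continuity assumption: calmness at $\xb$ forces continuity of $F$ at $\xb$ via $\norm{F(x)-F(\xb)}\leq\kappa\norm{x-\xb}$, while Fr\'echet differentiability forces calmness at $\xb$ via the first-order expansion $F(x)=F(\xb)+\nabla F(\xb)(x-\xb)+\oo(\norm{x-\xb})$, which yields a Lipschitz-type estimate with constant $\norm{\nabla F(\xb)}+1$ on a neighborhood.

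For sufficiency I would assume that $F$ is calm at $\xb$ and that $\gph DF(\xb)$ is a subspace. Then Lemma \ref{LemDimSubspSingleValued}(iii) immediately gives that $DF(\xb)$ is a single-valued linear mapping from $\R^n$ to $\R^m$ with $\dim\gph DF(\xb)=n$. Since calmness entails continuity of $F$ at $\xb$, the cited \cite[Exercise 9.25(b)]{RoWe98} applies and yields that $F$ is Fr\'echet differentiable at $\xb$, which settles this direction together with the dimension claim.

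For necessity I would assume $F$ Fr\'echet differentiable at $\xb$. Then $F$ is calm at $\xb$ by the expansion recorded above, and \cite[Exercise 9.25(b)]{RoWe98} gives $DF(\xb)=\nabla F(\xb)$, so that $\gph DF(\xb)=T_{\gph F}(\xb,F(\xb))=\{(w,\nabla F(\xb)w)\mv w\in\R^n\}$ is exactly the graph of a linear map, hence an $n$-dimensional subspace of $\R^n\times\R^m$. This establishes both the subspace property and the ``moreover'' assertion at once.

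I expect the main obstacle to be conceptual rather than computational: one must be sure that the notion of ``differentiable'' employed in \cite[Exercise 9.25(b)]{RoWe98} is precisely Fr\'echet differentiability in this finite-dimensional single-valued setting, and that the continuity standing hypothesis of that exercise is indeed furnished by calmness. Once the equivalence between calmness and the first-order estimate is recorded, the argument is merely a direct concatenation of Lemma \ref{LemDimSubspSingleValued}(iii) with the cited exercise, and no further estimates are required.
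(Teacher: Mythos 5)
Your proposal is correct and coincides with the paper's own treatment: the paper states this result as a corollary obtained precisely by combining Lemma \ref{LemDimSubspSingleValued}(iii) (calmness plus the subspace property of $\gph DF(\xb)$ forces $DF(\xb)$ to be a single-valued linear map with $\dim\gph DF(\xb)=n$) with the characterization of differentiability via the graphical derivative in \cite[Exercise 9.25(b)]{RoWe98}, and the converse direction via $T_{\gph F}(\xb,F(\xb))=\rge(I_n,\nabla F(\xb))$. No gaps; the argument is the same concatenation the author uses.
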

 \begin{proof}
   The ''only if''-part follows immediately from \cite[Exercise 9.25]{RoWe98} and in this case we have $T_{\gph F}(\xb,F(\xb))=\rge(I_n,\nabla F(\xb))$. Now assume that $F$ is calm at $\xb$ and that $T_{\gph F}(\xb,F(\xb))$ is a subspace. From the assumed calmness we may deduce that there is some real $\kappa\geq0$ such that $\norm{v}\leq\kappa \norm{u}$, $\forall (u,v)\in T_{\gph F}=\gph DF(\xb)$ by \cite[Proposition 9.24]{RoWe98}. Hence $\dim T_{\gph F}(\xb,F(\xb))\leq n$ because otherwise there would exist $0\not=v\in \R^m$ with $(0,v)\in T_{\gph F}(\xb,F(\xb))$. On the other hand, calmness of $F$ also implies that for every $u\in\R^n$ we can find some sequence $t_k\downarrow 0$ such that $F((\xb+t_ku)-F(\xb))/t_k$ converges to some $v$. Then $(u,v)\in T_{\gph F}(\xb,F(\xb))$ and therefore $\dim T_{\gph F}(\xb,F(\xb))\geq n$ and $\dom DF(\xb)=\R^n$. Hence, $\dim T_{\gph F}(\xb,F(\xb))=n$ and $DF(\xb)$ is a linear mapping, verifying that $F$ is Fr\'echet differentiable at $\xb$.
 \end{proof}
}\fi
 Next consider a Lipschitzian mapping $F:U\to\R^m$, where $U\subset\R^n$ is open. Let $D_F$ denote the set of all points $x\in U$ where $F$ is differentiable. Then $T_{\gph F}(x)=\rge(I_n,\nabla F(x))\in \Z_{nm}$, $x\in D_F$ and from Corollary \ref{CorFrechetDiff} we deduce that $\tilde\OO_F=\OO_F=\{(x,F(x))\mv x\in D_F\}$.
 By Rademacher's theorem, the set $U\setminus D_F$ has Lebesgue measure $0$. Recall the definition of the {\em B-Jacobian} (Bouligand) $\onabla F(x)$ which reads as
 \[\onabla F(x):=\{A\in\R^{m\times n}\mv \exists x_k\longsetto{D_F}x: A=\lim_{k\to\infty}\nabla F(x_k)\}.\]
 Since the Jacobians $\nabla F(x)$, $x\in D_F$ are bounded by the Lipschitz constant of $F$ and $D_F$ is dense in $U$, there holds $\onabla F(x)\not=\emptyset$, $x\in U$.
 For a convergent sequence $\nabla F(x_k)\to A$ we have that $\rge(I_n,\nabla F(x_k))\to\rge(I_n,A)$ and it follows that for every $x\in U$ there holds
 \begin{equation}\label{EqSCD_StrictDiff}\Sp F(x)=\tilde \Sp F(x)=\{\rge(I_n,A)\mv A\in\onabla F(x)\},\quad \Sp^* F(x)=\tilde \Sp^* F(x)=\{\rge(I_m,A^T)\mv A\in\onabla F(x)\},\end{equation}
 cf. \cite[Lemma 3.5]{GfrOut23}.

Strict differentiability will play an important role in this paper. Recall that a mapping $F:U\to\R^m$, $U\subset\R^n$ open, is strictly differentiable at $\xb\in U$ if there is an $m\times n$ matrix $A$ such that
\[\lim_{\AT{x,x'\to\xb}{x\not=x'}}\frac{F(x')-F(x)-A(x'-x)}{\norm{x'-x}}=0.\]
This implies that $F$ is differentiable at $\xb$ and $\nabla F(\xb)=A$. Further, by the definition of the strict derivative and \cite[Exercise 9.25(c)]{RoWe98}, respectively, strict differentiability can be identified  with the linearity of either $D_*F(\xb)$ or  $D^*F(\xb)$ and in this case one has
\begin{equation}\label{EqStricTDiffCalc}D_*F(\xb)(u)=\nabla F(\xb)u,\ u\in\R^n,\quad D^*F(\xb)(y^*)=\nabla F(\xb)^Ty^*,\ y^*\in\R^m.\end{equation}

In the following lemma we summarize some further characterizations of strict differentiability.
\begin{lemma}\label{LemStrictDiff}
  Consider a single-valued mapping $F:U\to\R^m$, where $U\subset\R^n$ is open, and a point $\xb\in U$. Then the following statements are equivalent:
  \begin{enumerate}
    \item[(i)] $F$ is strictly differentiable at $\xb$.
    \item[(ii)] $F$ is strictly continuous at $\xb$ and $F$ is strictly proto-differentiable at $\xb$.
    \item[(iii)] $F$ is both strictly continuous and graphically regular at $\xb$.
    \item[(iv)]  $F$ is strictly continuous at $\xb$ and $\onabla F(\xb)$ is a singleton.
    \item[(v)] $F$ is strictly continuous at $\xb$  and $\tilde\Sp F(\xb)$ is a singleton.
    \item[(vi)] $F$ is strictly continuous at $\xb$  and $\tilde\Sp^* F(\xb)$ is a singleton.
    \item[(vii)]  $F$ is strictly continuous at $\xb$ and $\gph D_*F(\xb)$ is a subspace.
    \item[(viii)] $F$ is strictly continuous at $\xb$ and $\gph D^*F(\xb)$ is a subspace.
  \end{enumerate}
  In every case there holds $\dim \gph D_*F(\xb)=n$, $\dim \gph D^*F(\xb)=m$, $\tilde \Sp F(\xb)=\Sp F(\xb)=\{\gph D_*F(\xb)\}$ and $\tilde \Sp^* F(\xb)=\Sp^* F(\xb)=\{\gph D^*F(\xb)\}$.
\end{lemma}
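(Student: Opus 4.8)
The plan is to exploit that strict continuity of $F$ at $\xb$ appears in every one of (i)--(viii) (and is implied by (i)), so that all the auxiliary results for locally Lipschitzian single-valued maps are available throughout, and then to cluster the eight conditions and tie each cluster back to strict differentiability (i). I will freely use the fact recorded just before the lemma that (i) is equivalent to linearity of $D_*F(\xb)$ and to linearity of $D^*F(\xb)$, together with the evaluation formulas \eqref{EqStricTDiffCalc}.

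For the first cluster (i)$\Leftrightarrow$(vii)$\Leftrightarrow$(viii)$\Leftrightarrow$(ii), the subspace conditions are matched to linearity by Lemma~\ref{LemDimSubspSingleValued}: under strict continuity $\gph D_*F(\xb)$ (resp. $\gph D^*F(\xb)$) is a subspace if and only if $D_*F(\xb)$ (resp. $D^*F(\xb)$) is single-valued and linear, i.e. (i). For (ii) I recall that by \eqref{EqStrictProtPrim} strict proto-differentiability means exactly that $\gph D_*F(\xb)=T^P_{\gph F}(\xb,F(\xb))$ is a subspace coinciding with $\widehat T_{\gph F}(\xb,F(\xb))$, so (ii)$\Rightarrow$(vii) is immediate; for (i)$\Rightarrow$(ii) I argue directly that strict differentiability makes $\gph F$ strictly smooth: writing $A=\nabla F(\xb)$, for any $z=(x,F(x))\to(\xb,F(\xb))$ and $t\downarrow0$ the points $(x+tu,F(x+tu))\in\gph F$ produce difference quotients converging to $(u,Au)$, whence $\rge(I_n,A)\subset\widehat T_{\gph F}\subset T^P_{\gph F}=\rge(I_n,A)$ by \eqref{EqStricTDiffCalc}, a subspace.

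For the second cluster (i)$\Leftrightarrow$(iv)$\Leftrightarrow$(v)$\Leftrightarrow$(vi), the equivalences (iv)$\Leftrightarrow$(v)$\Leftrightarrow$(vi) are read off from \eqref{EqSCD_StrictDiff} and the injectivity of $A\mapsto\rge(I_n,A)$ and $A\mapsto\rge(I_m,A^T)$, so each ``singleton'' condition amounts to $\onabla F(\xb)$ being a singleton. The direction (i)$\Rightarrow$(iv) is elementary: strict differentiability with derivative $A$ forces $\nabla F(x_k)\to A$ along any differentiability sequence $x_k\to\xb$ (test the coordinate-direction difference quotients), so $\onabla F(\xb)=\{A\}$. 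Moreover (i)$\Rightarrow$(iii) follows at once, since (i)$\Rightarrow$(ii) makes $\gph F$ strictly smooth and, by Lemma~\ref{LemStrictSmooth}, strictly smooth sets are normally regular.

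The two substantial implications are (iv)$\Rightarrow$(i) and (iii)$\Rightarrow$(iv). For (iv)$\Rightarrow$(i) I rely on the classical nonsmooth mean value argument: since $\onabla F(\xb)=\{A\}$ and the Jacobians are bounded, $\nabla F(x)\to A$ as $x\to\xb$ over differentiability points, and the Clarke/Lebourg mean value estimate then gives $\|F(x')-F(x)-A(x'-x)\|=o(\|x'-x\|)$; this is the most computation-heavy step. The crux of the lemma, which I expect to be the main obstacle, is (iii)$\Rightarrow$(iv): for each $A\in\onabla F(\xb)$ one has $\rge(I_m,A^T)\subset\gph D^*F(\xb)$ by \eqref{EqSCD_StrictDiff} and \eqref{EqInclSCD}, that is $(A^Ty^*,-y^*)\in N_{\gph F}(\xb,F(\xb))$ for all $y^*$; graphical regularity upgrades this to $(A^Ty^*,-y^*)\in\widehat N_{\gph F}(\xb,F(\xb))=T_{\gph F}(\xb,F(\xb))^\circ$, so $\langle y^*,Au-v\rangle\le0$ for every $y^*\in\R^m$ and every $(u,v)\in\gph DF(\xb)$, forcing $v=Au$. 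Since $\dom DF(\xb)=\R^n$ for a Lipschitzian map, any $A,B\in\onabla F(\xb)$ agree on all of $\R^n$, so $\onabla F(\xb)$ is a singleton. Once (i) holds, the ``in every case'' assertions are the evaluations \eqref{EqStricTDiffCalc} and \eqref{EqSCD_StrictDiff} at $A=\nabla F(\xb)$.
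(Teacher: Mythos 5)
Your proposal is correct, and its skeleton largely matches the paper's: both reduce the subspace conditions (vii), (viii) to linearity of $D_*F(\xb)$, $D^*F(\xb)$ via Lemma~\ref{LemDimSubspSingleValued} and then invoke the identification of strict differentiability with that linearity, and both dispatch (iv)$\Leftrightarrow$(v)$\Leftrightarrow$(vi) through \eqref{EqSCD_StrictDiff}. The genuine difference is that you prove from scratch the two links the paper outsources to the literature. For (iii)$\Leftrightarrow$(i) the paper simply cites \cite[Exercise~9.25(d)]{RoWe98}, whereas you route through (iii)$\Rightarrow$(iv) by combining \eqref{EqSCD_StrictDiff} and \eqref{EqInclSCD} with the polarity $\widehat N_{\gph F}(\xb,F(\xb))=T_{\gph F}(\xb,F(\xb))^\circ$ to force $v=Au$ for every $(u,v)\in T_{\gph F}(\xb,F(\xb))$ and every $A\in\onabla F(\xb)$; since $\dom DF(\xb)=\R^n$ and $\onabla F(\xb)\neq\emptyset$ by Rademacher, all elements of $\onabla F(\xb)$ coincide. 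For (i)$\Leftrightarrow$(iv) the paper cites \cite[Lemma~2.1]{GfrOut24a}, whereas you sketch the standard upper-semicontinuity-plus-Lebourg mean value argument. Likewise your (i)$\Rightarrow$(ii) is a direct difference-quotient computation rather than an appeal to \eqref{EqStricTDiffCalc} combined with Lemma~\ref{LemStrictProto}. What your version buys is self-containedness (no reliance on the two external results); what the paper's version buys is brevity and a cleaner dependency on already-established facts. Both are complete; your closing remark that the dimension and SC-derivative assertions follow from \eqref{EqStricTDiffCalc} and \eqref{EqSCD_StrictDiff} coincides with the paper's (which additionally invokes Lemma~\ref{LemDimSubspSingleValued} and \eqref{EqInclSCD}).
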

\begin{proof}
  ad (i)$\Rightarrow$(ii),(vii),(viii): Clearly, if $F$ is strictly differentiable at $\xb$, it is strictly continuous at $\xb$. Strict proto-differentiability and the subspace property of $\gph D_*F(\xb)$, $\gph D^*F(\xb)$ follow now from \eqref{EqStricTDiffCalc} and  Lemma \ref{LemStrictProto}.

  ad (ii)$\Rightarrow$(iii): Follows from Lemma \ref{LemStrictProto}.

  ad (iii)$\Leftrightarrow$(i): Follows from \cite[Exercise 9.25(d)]{RoWe98}.

  ad (i)$\Leftrightarrow$(iv): See, e.g., \cite[Lemma 2.1]{GfrOut24a}.

  ad (iv)$\Leftrightarrow$(v)$\Leftrightarrow$(vi): Follows from \eqref{EqSCD_StrictDiff}.

  ad (vii)$\Rightarrow$(i): If (vii) holds then $D_*F(\xb)$ is a linear mapping by Lemma \ref{LemDimSubspSingleValued}(i) and strict differentiability follows from the definitions.

  ad (viii)$\Rightarrow$(i): If (viii) holds then $D^*F(\xb)$ is a linear mapping by Lemma \ref{LemDimSubspSingleValued}(ii) and strict differentiability follows from  \cite[Exercise 9.25(c)]{RoWe98}.

  The assertions about the dimensions of $\gph D_*F(\xb)$ and $\gph D^*F(\xb)$ follow from Lemma \ref{LemDimSubspSingleValued}, the assertions about the SC derivatives follow from \eqref{EqInclSCD} and \eqref{EqSCD_StrictDiff}.
\end{proof}

\subsection{On graphically Lipschitzian and semismooth$^*$ mappings}
The following definitions will play an important role.
\begin{definition}\label{DefLipMan}
\begin{enumerate}
 \item[(i)]  A set $\Omega\subset \R^n$ is called a {\em Lipschitz manifold of dimension $d$ around $\zb\in\Omega$}  if there is an open
neighborhood $W$ of $\zb$ and a one-to-one mapping $\Phi$ between $W$ and  an open subset of $\R^n$ with $\Phi$ and $\Phi^{-1}$  continuously differentiable, such that $\Phi(\Omega\cap W)$ is the graph of a Lipschitz continuous mapping $f:U\to\R^{n-d}$,  where $U$ is an open set in $\R^d$.

\item[(ii)] A mapping $F:\R^n\tto\R^m$ is called {\em graphically Lipschitzian of dimension $d$ around $(\xb,\yb)\in\gph F$} if $\gph F$ is a Lipschitz manifold of dimension $d$ around $(\xb,\yb)$.
\item[(iii)] A mapping $F:\R^n\tto\R^m$ is called {\em graphically strictly differentiable} at $(\xb,\yb)\in\gph F$, and  of dimension $d$ in this respect, if $\gph F$ is a Lipschitz manifold of dimension $d$ around $(\xb,\yb)$ and, using the notation from (i), the mapping $f:U\to\R^{n+m-d}$ is strictly differentiable at $\bar u$, where $(\bar u,f(\bar u))=\Phi(\bar x,\bar y)$.
    \end{enumerate}
\end{definition}
Thus, a mapping $F$ is graphically Lipschitzian (graphically strictly differentiable) at some point belonging to its graph, if, up to some change of coordinates, the graph of $F$ can be locally identified with the graph of a Lipschitzian (strictly differentiable) single-valued mapping.
\begin{remark}
\begin{enumerate}
\item   The definition  above of a Lipschitz manifold is due to Rockafellar \cite{Ro85}. However, the reader should be aware that there are also other definitions of Lipschitz manifolds in the literature, see, e.g., \cite{Pen22}.
\item Our notion of graphical strict differentiability of a mapping $F:\R^n\tto\R^m$ is called graphical smoothness in \cite{Mo06a}. Thus, graphical smoothness in the sense of \cite{Mo06a} is a much stronger property than graphical smoothness in the sense of our Definition \ref{DefSCD}. Further, Rockafellar \cite{Ro85} defined a set $\Omega$ to be smooth at $\zb\in\Omega$ if $\Omega$ is geometrically derivable at $\zb$ and $T_\Omega(\zb)$ is a subspace. Hence, if $\gph F$ is smooth at $(\xb,\yb)\in\gph F$ in the sense of Rockafellar \cite{Ro85}, $F$ is graphically smooth at $(\xb,\yb)$ in the sense of our Definition \ref{DefSCD}, but the reverse implication will not hold in general.
\end{enumerate}
\end{remark}
An important class of graphically Lipschitzian mappings is formed by locally maximal hypo-monotone multifunctions. This class includes, in particular, the subdifferential mapping of so-called {\em prox-regular} functions typically encountered in finite-dimensional optimization. We refer the reader to the book of Rockafellar and Wets \cite{RoWe98} for more details and discussions. If a mapping $F:\R^n\tto\R^m$ is graphically Lipschitzian of dimension $n$ at $(\xb,\yb)\in\gph F$, then it has the SCD-property around $(\xb,\yb)$, cf. \cite[Proposition 3.17]{GfrOut22}. Further, one can show that $\Sp F(x,y)=\tilde \Sp F(x,y)$ for all $(x,y)\in\gph F$ close to $(\xb,\yb)$.

Consider a set $\Omega\subset\R^n$ which is a Lipschitz manifold of dimension $d$ around a point $\zb\in\Omega$ and let $\Phi$ and $f$ be as in Definition \ref{DefLipMan}. Then
\begin{gather}\label{EqTangConeChangeCoord}T_{\gph f}(\Phi(\zb))=\nabla \Phi(\zb)T_\Omega(\zb),\ \widehat T_{\gph f}(\Phi(\zb))=\nabla \Phi(\zb)\widehat T_\Omega(\zb),\ T^P_{\gph f}(\Phi(\zb))=\nabla \Phi(\zb)T^P_\Omega(\zb),\\
\label{EqNormalConeChangeCoord}\widehat N_{\gph f}(\Phi(\zb))=\nabla \Phi(\zb)^{-T}\widehat N_\Omega(\zb),\  N_{\gph f}(\Phi(\zb))=\nabla \Phi(\zb)^{-T} N_\Omega(\zb).
\end{gather}
\begin{lemma}\label{LemDimLipMan}
     Assume that $\Omega\subset\R^n$ is a Lipschitz manifold of dimension $d$ around $\zb\in\Omega$. If $T^P_\Omega(\zb)$ is a subspace then $\dim T^P_\Omega(\zb)=d$. Further, if $N_\Omega(\zb)$ is a subspace then its dimension is $n-d$.
\end{lemma}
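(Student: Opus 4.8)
The plan is to reduce everything to the single-valued Lipschitzian setting already handled by Lemma \ref{LemDimSubspSingleValued}, transporting the dimension counts back and forth through the change of coordinates $\Phi$ supplied by Definition \ref{DefLipMan}. Write $\Phi(\zb)=(\ub,f(\ub))$ with $\ub\in U\subset\R^d$, and recall that $f:U\to\R^{n-d}$ is Lipschitz continuous, hence locally Lipschitz and so strictly continuous at $\ub$. Since both $\Phi$ and $\Phi^{-1}$ are continuously differentiable, $\Phi$ is a $C^1$-diffeomorphism, so $\nabla\Phi(\zb)$ is nonsingular; consequently $\nabla\Phi(\zb)$ and $\nabla\Phi(\zb)^{-T}$ are linear isomorphisms of $\R^n$ and map subspaces to subspaces of the same dimension. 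This is the device that carries the subspace property and the dimension between $\Omega$ at $\zb$ and $\gph f$ at $\Phi(\zb)$.

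For the first assertion, I would assume $T^P_\Omega(\zb)$ is a subspace. Then by the transformation rule \eqref{EqTangConeChangeCoord}, $T^P_{\gph f}(\Phi(\zb))=\nabla\Phi(\zb)T^P_\Omega(\zb)$ is a subspace of the same dimension as $T^P_\Omega(\zb)$. By Definition \ref{DefStrictProto} we have $T^P_{\gph f}(\Phi(\zb))=\gph D_*f(\ub)$, so $\gph D_*f(\ub)$ is a subspace. Applying Lemma \ref{LemDimSubspSingleValued}(i) to $f$, whose domain $U$ lies in $\R^d$, gives $\dim\gph D_*f(\ub)=d$, and therefore $\dim T^P_\Omega(\zb)=d$.

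For the second assertion, I would assume $N_\Omega(\zb)$ is a subspace. By \eqref{EqNormalConeChangeCoord}, $N_{\gph f}(\Phi(\zb))=\nabla\Phi(\zb)^{-T}N_\Omega(\zb)$ is a subspace of the same dimension as $N_\Omega(\zb)$. Since $\gph D^*f(\ub)=S_{d,n-d}N_{\gph f}(\Phi(\zb))$ is the image of $N_{\gph f}(\Phi(\zb))$ under the orthogonal coordinate swap $(x^*,-y^*)\mapsto(y^*,x^*)$, it is again a subspace of the same dimension. Now Lemma \ref{LemDimSubspSingleValued}(ii), applied to $f$ with codomain $\R^{n-d}$ (so $m=n-d$), yields $\dim\gph D^*f(\ub)=n-d$, whence $\dim N_\Omega(\zb)=n-d$.

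I do not expect a genuine obstacle here: the argument is essentially a transfer of the two dimension statements of Lemma \ref{LemDimSubspSingleValued} through the diffeomorphism $\Phi$. The only points needing care are the bookkeeping of which quantity ($d$ versus $n-d$) plays the role of the domain and codomain dimension of $f$ in Lemma \ref{LemDimSubspSingleValued}, and the verification that $\nabla\Phi(\zb)$, $\nabla\Phi(\zb)^{-T}$ and the coderivative rotation $S_{d,n-d}$ are all linear isomorphisms, so that passing through them preserves both the subspace property and the dimension.
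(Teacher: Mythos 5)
Your argument is correct and is essentially identical to the paper's own proof: both transport the subspace property and its dimension through $\nabla\Phi(\zb)$ via \eqref{EqTangConeChangeCoord}--\eqref{EqNormalConeChangeCoord} and then invoke Lemma \ref{LemDimSubspSingleValued}(i) and (ii) for the Lipschitzian mapping $f:U\subset\R^d\to\R^{n-d}$. The bookkeeping of domain and codomain dimensions ($d$ and $n-d$) is handled correctly.
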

\begin{proof}
  Let $\Phi$ and $f$ be as in Definition \ref{DefLipMan} and let $\bar u$ be given by $(\bar u,f(\bar u))=\Phi(\zb)$. If $T^P_\Omega(\zb)$ is a subspace, then $\gph D_*f(\bar u)=T^P_{\gph f}(\Phi(\zb))$ is a subspace with the same dimension by \eqref{EqTangConeChangeCoord} and together with  Lemma \ref{LemDimSubspSingleValued}(i) we conclude that $\dim T^P_\Omega(\zb)=d$. If $N_\Omega(\zb)$ is a subspace, then $N_{\gph f}(\Phi(\zb))$ and $\gph D^*f(\ub)$ are subspaces with the same dimension by \eqref{EqNormalConeChangeCoord} and from Lemma \ref{LemDimSubspSingleValued}(ii) the equality $\dim N_\Omega(\zb)=n-d$ follows.
\end{proof}

If $F:\R^n\tto\R^m$ is graphically Lipschitzian of dimension $d$ at $(\xb,\yb)\in\gph F$, then with the corresponding mappings $\Phi$ and $f$ there hold the relations
\begin{gather}\label{EqTransGraphDer}\gph Df(\bar u)=\nabla \Phi(\xb,\yb)\gph DF(\xb,\yb),\ \gph D_*f(\bar u)=\nabla \Phi(\xb,\yb)\gph D_*F(\xb,\yb),\\
\nonumber\gph D^*f(\bar u)=S_{d,n+m-d}\nabla \Phi(\xb,\yb)^{-T}S_{nm}^T\gph D^*F(\xb,\yb),\\
\label{EqTransSCDDer}\tilde \Sp f(\bar u)=\Sp f(\bar u)=\nabla \Phi(\xb,\yb)\tilde \Sp F(\xb,\yb),\ \tilde \Sp^* f(\bar u)=\Sp^* f(\bar u)=S_{d,n+m-d}\nabla \Phi(\xb,\yb)^{-T}S_{nm}^T\tilde \Sp^*F(\bar u),
\end{gather}
where $\bar u$ is given by $(\bar u,f(\bar u))=\Phi(\xb,\yb)$.

Let us now recall the definitions of \ssstar sets and mappings.

\begin{definition}%\label{DefSemiSmooth}
\begin{enumerate}
\item  A set $\Omega\subseteq\R^n$ is called {\em \ssstar} at a point $\zb\in \Omega$ if for every $\epsilon>0$ there is some $\delta>0$ such that
  \begin{equation*}%\label{EqCharSemiSmoothSetLim}
\vert \skalp{z^*,z-\zb}\vert\leq \epsilon \norm{z-\zb}\norm{z^*}\ \forall z\in \Omega\cap\B_\delta(\zb)\
\forall z^*\in N_\Omega(z).
\end{equation*}
\item
A set-valued mapping $F:\R^n\tto\R^m$ is called {\em \ssstar} at a point $(\xb,\yb)\in\gph F$, if
$\gph F$ is \ssstar at $(\xb,\yb)$, i.e.,   for every $\epsilon>0$ there is some $\delta>0$ such that
\begin{align*}
\nonumber&\lefteqn{\vert \skalp{x^*,x-\xb}-\skalp{y^*,y-\yb}\vert}\\
%\label{EqCharSemiSmoothLim}
&\qquad\leq \epsilon
\norm{(x,y)-(\xb,\yb)}\norm{(y^*,x^*)}\ \forall(x,y)\in \gph F\cap\B_\delta(\xb,\yb)\ \forall
(y^*,x^*)\in\gph D^*F(x,y).
\end{align*}
\end{enumerate}
\end{definition}
Note that the semismooth$^*$ property was defined in \cite{GfrOut21} in a different but equivalent way. The class of semismooth$^*$ mappings is rather broad.
We mention here two important classes of multifunctions having this property.
\begin{proposition}[cf. {\cite[Proposition 2.10]{GfrOut22}}]%\label{PropSSstar}
  \begin{enumerate}
    \item[(i)]Every mapping whose graph is the union of finitely many closed convex sets is \ssstar at every point of its graph.
    \item[(ii)]Every mapping with closed subanalytic graph is \ssstar at every point of its graph.
  \end{enumerate}
\end{proposition}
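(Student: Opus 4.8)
The plan is to prove everything at the level of sets. Since by definition $F$ is \ssstar at $(\xb,\yb)$ precisely when $\gph F$ is \ssstar at $(\xb,\yb)$, it suffices to show that a set $\Omega\subset\R^n$ of either type is \ssstar at every $\zb\in\Omega$. I would first record the contrapositive sequential form: $\Omega$ fails to be \ssstar at $\zb$ if and only if there are $\epsilon>0$ and sequences $z_k\to\zb$ in $\Omega$, together with unit normals $z_k^*\in N_\Omega(z_k)$, such that $\vert\skalp{z_k^*,z_k-\zb}\vert>\epsilon\norm{z_k-\zb}$ for all $k$ (so in particular $z_k\neq\zb$). Both parts then amount to ruling out such a sequence.

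For (i), write $\Omega=\bigcup_{i=1}^N C_i$ with the $C_i$ closed and convex. Since each $C_i$ is closed, $\dist{\zb,C_i}>0$ whenever $\zb\notin C_i$, so for $z$ sufficiently close to $\zb$ only the pieces with $\zb\in C_i$ contain $z$; moreover $\widehat N_\Omega(w)\subset\widehat N_{C_i}(w)$ for $w\in C_i$ (enlarging a set shrinks its regular normal cone), and taking the outer limit defining $N_\Omega$ together with the finiteness of the union yields $N_\Omega(z)\subset\bigcup_{i:\,z\in C_i}N_{C_i}(z)$ for $z$ near $\zb$. Feeding the sequence from the reformulation above into this inclusion and passing to a subsequence, I may assume all $z_k$ lie in one fixed piece $C_j$ with $z_k^*\in N_{C_j}(z_k)$ and $\zb\in C_j$. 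Convexity of $C_j$ gives $\skalp{z_k^*,\zb-z_k}\leq0$, i.e. $\skalp{z_k^*,z_k-\zb}\geq0$. Extracting further limits $z_k^*\to\bar z^*\in N_{C_j}(\zb)$ (the convex normal-cone mapping is outer semicontinuous) and $(z_k-\zb)/\norm{z_k-\zb}\to u\in T_{C_j}(\zb)$, the convex polarity $N_{C_j}(\zb)=T_{C_j}(\zb)^\circ$ forces $\skalp{\bar z^*,u}\leq0$, whereas the sign information just obtained together with $\vert\skalp{z_k^*,z_k-\zb}\vert>\epsilon\norm{z_k-\zb}$ yields $\skalp{\bar z^*,u}\geq\epsilon>0$ in the limit --- a contradiction.

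For (ii) the soft tools no longer suffice and I would turn to subanalytic geometry. The key input is that when $\Omega$ is closed subanalytic the graph $G:=\{(z,z^*)\mv z\in\Omega,\ z^*\in N_\Omega(z)\}$ of the limiting normal-cone mapping is again subanalytic, $N_\Omega$ being built from $\Omega$ by polarity and an outer limit, operations that stay within the subanalytic category. If $\Omega$ were not \ssstar at $\zb$, then $B:=\{(z,z^*)\in G\mv\norm{z^*}=1,\ \vert\skalp{z^*,z-\zb}\vert\geq\epsilon\norm{z-\zb}\}$ is subanalytic and contains $(\zb,\bar z^*)$ in its closure for some unit $\bar z^*$. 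By the Curve Selection Lemma there is a real-analytic arc $s\in[0,\sigma)\mapsto(z(s),z^*(s))\in B$ with $z(0)=\zb$, $z(s)\neq\zb$ for $s>0$ and $z^*(s)\to\bar z^*$; writing $z(s)-\zb=as^p+o(s^p)$ with $a\neq0$ and $u:=a/\norm a\in T_\Omega(\zb)$, the leading-order term gives $\skalp{z^*(s),z(s)-\zb}/\norm{z(s)-\zb}\to\skalp{\bar z^*,u}$, so $\vert\skalp{\bar z^*,u}\vert\geq\epsilon$. The main obstacle is to contradict this by proving $\skalp{\bar z^*,u}=0$, i.e. that along an analytic arc in a subanalytic set the normals become asymptotically orthogonal to the secant --- here, unlike the convex case, there is no polarity between limiting normals and ordinary tangents. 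My plan is to use a subanalytic stratification to arrange $z^*(s)\in\widehat N_\Omega(z(s))$ for $s>0$, so that testing the regular-normal inequality against the neighbouring arc points $z(s')$ (from both sides) gives $\skalp{z^*(s),z'(s)}=0$; then, setting $\phi(s):=\skalp{z^*(s),z(s)-\zb}$ one has $\phi'(s)=\skalp{(z^*)'(s),z(s)-\zb}$, and a {\L}ojasiewicz-type comparison of the orders of $\phi$, $\phi'$ and $\norm{z(s)-\zb}$ forces $\phi(s)=o(\norm{z(s)-\zb})$, whence $\skalp{\bar z^*,u}=0$. This order analysis, along with the stratification needed to select regular normals along the arc, is the genuinely technical part, the remainder being routine manipulation of Puiseux expansions.
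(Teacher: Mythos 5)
The paper itself offers no argument for this proposition --- it is imported verbatim from \cite[Proposition 2.10]{GfrOut22} --- so the comparison is necessarily against the cited source rather than anything in the present text. Your part (i) is a complete and correct proof: the localization to the pieces containing $\zb$, the inclusion $N_\Omega(z)\subset\bigcup_{i:\,z\in C_i}N_{C_i}(z)$ for $z$ near $\zb$ (via pigeonholing the approximating sequence in the definition of the limiting normal cone and using closedness of each $C_i$), the one-sided estimate $\skalp{z_k^*,z_k-\zb}\geq 0$ from convexity, and the final contradiction with the polarity $N_{C_j}(\zb)=T_{C_j}(\zb)^\circ$ all check out.

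Part (ii), however, is a plan rather than a proof, and the step you yourself flag as ``the genuinely technical part'' is exactly where it breaks down as written. First, the curve selection lemma hands you an arc in $B\subset\gph N_\Omega$, i.e.\ with \emph{limiting} normals $z^*(s)$; you cannot afterwards ``use a stratification to arrange $z^*(s)\in\widehat N_\Omega(z(s))$'', since a limiting normal at a point need not be a regular normal there. The fix is to build the bad set from $\gph\widehat N_\Omega$ in the first place --- legitimate because $\gph N_\Omega\subset\cl\gph\widehat N_\Omega$ and the defining inequality $\vert\skalp{z^*,z-\zb}\vert\geq\epsilon\norm{z-\zb}$ survives small perturbations of $(z,z^*)$ once $z\neq\zb$ is enforced (it must be enforced anyway, or curve selection may return the constant arc $z(s)\equiv\zb$) --- but this substitution, and the verification that $\gph\widehat N_\Omega$ is subanalytic, is not carried out. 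Relatedly, ``operations that stay within the subanalytic category'' is too quick: subanalytic sets are not stable under unbounded projections, so one must localize (bounded neighborhood of $\zb$, unit normals) before invoking definability and curve selection. Granting the orthogonality $\skalp{z^*(s),z'(s)}=0$, the concluding estimate is actually easier than you suggest: $\phi'(s)=\skalp{(z^*)'(s),z(s)-\zb}=\OO(s^p)$ with $(z^*)'$ bounded after reparametrization, so integrating from $\phi(0)=0$ gives $\phi(s)=\OO(s^{p+1})=\oo(\norm{z(s)-\zb})$ directly, with no {\L}ojasiewicz input needed. So the architecture is the right one (it is essentially the known argument), but as submitted the central orthogonality step rests on an unjustified claim.
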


For single-valued functions there holds the following result.

\begin{proposition}[cf. {\cite[Proposition 3.7]{GfrOut21}}]\label{PropNewtondiff}
Assume that $F:\R^n\to \R^m$ is a single-valued mapping
which is Lipschitzian near $\xb$. Then the following two statements are equivalent.
\begin{enumerate}
  \item[(i)] $F$ is \ssstar at $\xb$.
  \item[(ii)] For every $\epsilon>0$ there is some $\delta>0$ such that
  \begin{equation*}%\label{EqCharSemiSmoothLipsch}
    \norm{F(x)-F(\xb)-C(x-\xb)}\leq\epsilon\norm{x-\xb}\ \forall x\in \B_\delta(\xb)\ \forall
    C\in\co \overline\nabla F(x).
  \end{equation*}
\if{  i.e.,
  \begin{equation}\label{EqCharSemiSmoothLipsch1}
    \sup_{C\in \co\overline\nabla F(x)}\norm{F(x)-F(\xb)-C(x-\xb)}=\oo(\norm{x-\xb}).
  \end{equation}
}\fi
\end{enumerate}
\end{proposition}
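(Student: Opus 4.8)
The plan is to translate both conditions into statements about the B-Jacobian $\onabla F$ and to move between them by a two-sided relation between the limiting coderivative and $\onabla F$. Writing $\yb=F(\xb)$ and inserting $y=F(x)$, the \ssstar condition (i) is a bound on $\skalp{x^*,x-\xb}-\skalp{y^*,F(x)-F(\xb)}$ over all $(y^*,x^*)\in\gph D^*F(x)$, whereas (ii) bounds $\norm{F(x)-F(\xb)-C(x-\xb)}$ over all $C\in\co\onabla F(x)$. The relation I would use is
\begin{equation}\label{EqCoDerBJac}
\{A^Ty^*\mv A\in\onabla F(x)\}\subseteq D^*F(x)(y^*)\subseteq\{C^Ty^*\mv C\in\co\onabla F(x)\},\quad y^*\in\R^m.
\end{equation}
The left inclusion is immediate from the material above: by \eqref{EqSCD_StrictDiff} every $A\in\onabla F(x)$ produces $\rge(I_m,A^T)=\big(\rge(I_n,A)\big)^*\in\Sp^*F(x)$, and \eqref{EqInclSCD} then gives $\rge(I_m,A^T)\subseteq\gph D^*F(x)$, i.e. $A^Ty^*\in D^*F(x)(y^*)$ for all $y^*$. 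The right inclusion is the Mordukhovich scalarization estimate, asserting that the limiting coderivative of a strictly continuous mapping is contained in the transposed Clarke generalized Jacobian $\co\onabla F(x)$; this is the single external ingredient I would invoke.

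For (i)$\Rightarrow$(ii) I would fix the target $\epsilon>0$, apply (i) with accuracy $\epsilon/(1+\kappa^2)$ (where $\kappa$ is a local Lipschitz modulus of $F$, so that $\norm{A}\leq\kappa$ for $A\in\onabla F$), and work with a fixed unit vector $y^*$. Writing an arbitrary $C\in\co\onabla F(x)$ as a convex combination $C=\sum_i\lambda_iA_i$ of matrices $A_i\in\onabla F(x)$, the left inclusion in \eqref{EqCoDerBJac} makes each $x_i^*:=A_i^Ty^*$ admissible in (i); the elementary bounds $\norm{(x,F(x))-(\xb,\yb)}\leq\sqrt{1+\kappa^2}\norm{x-\xb}$ and $\norm{(y^*,x_i^*)}\leq\sqrt{1+\kappa^2}$ then turn each resulting estimate into $\vert\skalp{A_i^Ty^*,x-\xb}-\skalp{y^*,F(x)-F(\xb)}\vert\leq\epsilon\norm{x-\xb}$. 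Taking the convex combination $\sum_i\lambda_i(\cdot)$ collapses the left-hand side to $\vert\skalp{y^*,C(x-\xb)-(F(x)-F(\xb))}\vert$, and a supremum over unit $y^*$ yields the matrix estimate in (ii).

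For (ii)$\Rightarrow$(i) I would, given $(y^*,x^*)\in\gph D^*F(x)$, use the right inclusion in \eqref{EqCoDerBJac} to write $x^*=C^Ty^*$ with $C\in\co\onabla F(x)$, so that $\skalp{x^*,x-\xb}-\skalp{y^*,F(x)-F(\xb)}=\skalp{y^*,C(x-\xb)-(F(x)-F(\xb))}$; the Cauchy--Schwarz inequality and (ii) bound this by $\epsilon\norm{y^*}\norm{x-\xb}$, and the trivial estimates $\norm{y^*}\leq\norm{(y^*,x^*)}$ and $\norm{x-\xb}\leq\norm{(x,F(x))-(\xb,\yb)}$ finish (i). I expect the main obstacle to be the right inclusion in \eqref{EqCoDerBJac}: at points where $F$ is nondifferentiable the limiting normals to $\gph F$ no longer reduce to $\rge(-\nabla F(x)^T,I_m)$, so the passage from the coderivative to the \emph{convex hull} $\co\onabla F(x)$ is genuinely needed and is exactly where the scalarization theorem does its work; everything else reduces to the convexity argument and the norm bounds above.
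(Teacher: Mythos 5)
The paper does not prove this proposition itself; it imports it from \cite[Proposition 3.7]{GfrOut21}, so there is no in-paper proof to compare against. Judged on its own, your argument is correct and self-contained. The two-sided relation $\{A^Ty^*\mv A\in\onabla F(x)\}\subseteq D^*F(x)(y^*)\subseteq\{C^Ty^*\mv C\in\co\onabla F(x)\}$ is exactly the right hinge: the left inclusion does follow from \eqref{EqSCD_StrictDiff} together with \eqref{EqInclSCD} (one checks $\big(\rge(I_n,A)\big)^*=\rge(I_m,A^T)$, which matches the paper's formula for $\Sp^*F$), and the right inclusion is the Mordukhovich scalarization $D^*F(x)(y^*)=\partial\skalp{y^*,F}(x)$ for strictly continuous $F$ (\cite[Exercise 9.24(b)]{RoWe98}) combined with $\partial\skalp{y^*,F}(x)\subseteq\co\onabla F(x)^Ty^*$ via Clarke's generalized Jacobian; you correctly flag this as the one external ingredient. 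The convexity step in (i)$\Rightarrow$(ii) is legitimate precisely because the semismooth$^*$ estimate is applied to each extreme matrix $A_i$ separately \emph{before} averaging, and the constants $\sqrt{1+\kappa^2}$ are handled correctly (with the implicit shrinking of $\delta$ so that $(x,F(x))\in\B_\delta(\xb,\yb)$). The reverse direction via $x^*=C^Ty^*$ and Cauchy--Schwarz is immediate. This is, to the best of my knowledge, the same scalarization-based route taken in the cited source, so nothing is lost or gained relative to it; the only thing I would make explicit in a written version is the precise citation for the scalarization inclusion, since everything else reduces to linear algebra and norm estimates already available in the paper.
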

By \cite[Corollary 4.3]{GfrOut24a}, graphically Lipschitzian and \ssstar mappings are almost everywhere strictly proto-differentiable:
\begin{theorem}
  Assume that the mapping $F:\R^n\tto\R^m$ is graphically Lipschitzian of dimension $n$ at $(\xb,\yb)\in\gph F$ and is \ssstar at all $(x,y)\in\gph F$ close to $(\xb,\yb)$. Then there is an open neighborhood $W$ of $(\xb,\yb)$ such that for almost all $(x,y)\in\gph F\cap W$ (with respect to the $n$-dimensional Hausdorff measure) the mapping $F$ is strictly proto-differentiable at $(x,y)$.
\end{theorem}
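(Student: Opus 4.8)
The plan is to reduce the set-valued statement to a statement about a single Lipschitzian function and then to run an almost-everywhere argument combining Rademacher's theorem with the \ssstar property.

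Since $F$ is graphically Lipschitzian of dimension $n$ at $(\xb,\yb)$, Definition \ref{DefLipMan} supplies an open neighbourhood $W$ of $(\xb,\yb)$, a $C^1$-diffeomorphism $\Phi$ on $W$ and a Lipschitzian $f:U\to\R^m$ ($U\subset\R^n$ open) with $\Phi(\gph F\cap W)=\gph f$. Because $\nabla\Phi(x,y)$ is a linear isomorphism, the change-of-coordinate formulas \eqref{EqTangConeChangeCoord} carry the paratingent and regular tangent cones of $\gph F$ onto those of $\gph f$, so the subspace/coincidence condition \eqref{EqStrictProtPrim} is preserved: $F$ is strictly proto-differentiable at $(x,y)\in\gph F\cap W$ if and only if $f$ is strictly proto-differentiable at the point $u$ given by $(u,f(u))=\Phi(x,y)$. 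As $f$ is Lipschitz, hence strictly continuous, Lemma \ref{LemStrictDiff}((i)$\Leftrightarrow$(ii)) shows this is the same as strict differentiability of $f$ at $u$. The \ssstar property transfers under the $C^1$ change of coordinates (one checks this directly from the definition together with the normal-cone transformation \eqref{EqNormalConeChangeCoord}), so $f$ is \ssstar at every point of $U$ near $\ub$, where $(\ub,f(\ub))=\Phi(\xb,\yb)$. Finally, the projection $(u,f(u))\mapsto u$ is bi-Lipschitz, so the $n$-dimensional Hausdorff measure on $\gph F\cap W$ corresponds, through $\Phi$ and this projection, to the Lebesgue measure on $U$. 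It therefore suffices to show: a Lipschitzian $f:U\to\R^m$ that is \ssstar at every point near $\ub$ is strictly differentiable at Lebesgue-almost every $u$.

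By Lemma \ref{LemStrictDiff}((i)$\Leftrightarrow$(iv)), strict differentiability of $f$ at $u$ is equivalent to $\onabla f(u)$ being a singleton. By Rademacher's theorem the differentiability set $D_f$ has full measure, and the measurable, bounded map $u\mapsto\nabla f(u)$ on $D_f$ is approximately continuous at almost every point. Fix a good point $\ub'$: $\ub'\in D_f$, $D_f$ has density $1$ at $\ub'$, and for every $\eta>0$ the set $G_\eta:=\{x\in D_f\mv \norm{\nabla f(x)-\nabla f(\ub')}\le\eta\}$ has density $1$ at $\ub'$. We must prove $\onabla f(\ub')=\{\nabla f(\ub')\}$.

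The engine is the \ssstar characterization of Proposition \ref{PropNewtondiff}. Applying it at two reference points $x,x'\in D_f$ (with $C=\nabla f(x')$, respectively $C=\nabla f(x)$) and adding the resulting inequalities yields the chord estimate
\[\norm{\big(\nabla f(x)-\nabla f(x')\big)(x-x')}\le \epsilon\,\norm{x-x'}\]
for $x,x'$ close enough (depending on $\epsilon$). Now take any $A\in\onabla f(\ub')$, say $A=\lim_k\nabla f(x_k)$ with $x_k\to\ub'$ in $D_f$. Fixing a unit direction $d$, I would use the density-$1$ property of $G_\eta$ to pick, near each $x_k$, a point $x_k'\in G_\eta$ with $(x_k'-x_k)/\norm{x_k'-x_k}\approx d$; the chord estimate applied to $x_k,x_k'$, combined with $\norm{\nabla f(x_k')-\nabla f(\ub')}\le\eta$ and $\nabla f(x_k)\to A$, forces $\norm{(A-\nabla f(\ub'))d}\le C(\eta+\epsilon)$. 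Letting $\epsilon,\eta\downarrow0$ and varying $d$ over the sphere gives $A=\nabla f(\ub')$, so $\onabla f(\ub')$ is a singleton and $f$ is strictly differentiable at $\ub'$. Through the reduction of the first paragraph this yields strict proto-differentiability of $F$ at $\mathcal{H}^n$-almost every point of $\gph F\cap W$.

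The hard part is precisely the selection in the previous paragraph: for each approach sequence $x_k\to\ub'$ one must realize essentially every direction $d$ by a nearby point of $G_\eta$, while staying inside the possibly very small neighbourhoods imposed by the \ssstar estimate at the moving reference points $x_k$ and $x_k'$. The difficulty is that the density of $G_\eta$ is controlled at $\ub'$, whereas the semismooth$^*$ radius shrinks around the off-center points $x_k$; reconciling these nested scales requires invoking the Lebesgue density theorem for $G_\eta$ at almost every point (not only at $\ub'$) and arranging the tolerances $\eta$, $\epsilon$, the semismooth$^*$ radius and the rate $\nabla f(x_k)\to A$ in the correct order. This measure-theoretic bookkeeping, rather than any single inequality, is where the real work lies.
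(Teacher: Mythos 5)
The paper does not actually prove this theorem: it imports it verbatim from \cite[Corollary 4.3]{GfrOut24a}, so there is no in-paper argument to match yours against. Your first paragraph --- passing to the Lipschitzian representative $f$ via $\Phi$, noting that \eqref{EqTangConeChangeCoord}--\eqref{EqNormalConeChangeCoord} make strict proto-differentiability of $F$ at $(x,y)$ equivalent to strict differentiability of $f$ at the transformed point (Lemma \ref{LemStrictDiff}), that the \ssstar property survives the $C^1$ change of coordinates, and that $\mathcal H^n$-null subsets of $\gph F\cap W$ correspond to Lebesgue-null subsets of $U$ --- is correct and is the natural (and, as far as the cited source goes, the intended) reduction. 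The problem is the second half.

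The core step --- showing $\onabla f(\ub')=\{\nabla f(\ub')\}$ at a.e.\ $\ub'$ --- is not closed, and the gap you yourself flag in the last paragraph is genuine, not bookkeeping. Your chord estimate $\norm{(\nabla f(x)-\nabla f(x'))(x-x')}\le 2\epsilon\norm{x-x'}$ requires $x'\in\B_{\delta(x,\epsilon)}(x)$ \emph{and} $x\in\B_{\delta(x',\epsilon)}(x')$, where $\delta(\cdot,\epsilon)$ is the pointwise \ssstar modulus from Proposition \ref{PropNewtondiff}. In your selection the points $x_k$ are \emph{imposed} (they realize $A\in\onabla f(\ub')$), so you have no lower bound on $\delta(x_k,\epsilon)$, and the companion points $x_k'$ must be found in balls around $x_k$ whose radii are dictated by these uncontrolled moduli; density~$1$ of $G_\eta$ at $\ub'$ says nothing about its density in a ball of radius $r_k\ll\norm{x_k-\ub'}$ centred at the off-centre point $x_k$. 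Note also that single-point semismoothness at $\ub'$ only controls $(A-\nabla f(\ub'))d^*$ along the limiting approach direction $d^*$ of $(x_k-\ub')/\norm{x_k-\ub'}$, so the all-directions control really does hinge on this unresolved selection. Closing the argument needs additional structure you do not invoke: e.g.\ a decomposition $U=\bigcup_j\{x\mv \delta(x,\epsilon)\ge 1/j\}$ (plus a uniformized differentiability modulus) so that one can work at common density points where the \ssstar radius is bounded below, and/or Clarke's theorem that $\co\onabla f(\ub')$ is unchanged when the approach sequences are restricted to the complement of any prescribed null set, which would let you replace the imposed $x_k$ by points of a favourable full-measure set. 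Without one of these devices the proof does not go through; as written, your text is a correct reduction followed by an honestly labelled but unproved key claim.
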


\subsection{On regularity of set-valued mappings}

The last subsection is devoted to various regularity properties.

\begin{definition}%\label{DefRegularity}
  Let $F:\R^n\tto\R^m$ be a mapping and let $(\xb,\yb)\in\gph F$.
  \begin{enumerate}
  \item $F$ is said to be {\em strongly metrically subregular at} $(\xb,\yb)$ if there exists $\kappa\geq 0$  along with some  neighborhood $U$ of $\xb$ such that
    \begin{equation*}%\label{EqSubreg}
      \norm{x-\xb}\leq \kappa\dist{\yb,F(x)}\ \forall x\in U.
    \end{equation*}
  \item $F$ is said to be {\em metrically regular around} $(\xb,\yb)$ if there is $\kappa\geq 0$ together with neighborhoods $U$ of $\xb$ and $V$ of $\yb$ such that
      \begin{equation*}%\label{EqMetrReg}
      \dist{x,F^{-1}(y)}\leq \kappa\dist{y,F(x)}\ \forall (x,y)\in U\times V.
    \end{equation*}
  \item $F$ is said to be {\em strongly metrically regular around} $(\xb,\yb)$ if it is metrically regular around $(\xb,\yb)$ and $F^{-1}$ has a single-valued localization around $(\yb,\yb)$, i.e., there are  open neighborhoods $V'$ of $\yb$, $U'$ of $\xb$ and a mapping $h:V'\to\R^n$ with $h(\yb)=\xb$ such that $\gph F\cap (U'\times V')=\{(h(y),y)\mv y\in V'\}$. By metric regularity of $F$, $h$ is actually strictly continuous at $\yb$.
  \end{enumerate}
\end{definition}

In this paper we will also use the  following point-based characterizations of the above regularity properties.
\begin{theorem}\label{ThCharRegByDer}
    Let $F:\R^n\tto\R^m$ be a mapping whose graph is locally closed at  $(\xb,\yb)\in\gph F$.
    \begin{enumerate}
    \item[(i)] (Levy-Rockafellar criterion, see, e.g., \cite[Theorem 4E.1]{DoRo14}) $F$ is strongly metrically subregular at $(\xb,\yb)$ if and only if
      \begin{equation}\label{EqLevRoCrit}
        0\in DF(\xb,\yb)(u)\ \Rightarrow u=0.
      \end{equation}
    \item[(ii)] (Mordukhovich criterion, see, e.g., \cite[Theorem 3.3]{Mo18}) $F$ is metrically regular around $(\xb,\yb)$ if and only if
      \begin{equation}
            \label{EqMoCrit} 0\in D^*F(\xb,\yb)(y^*)\ \Rightarrow\ y^*=0.
      \end{equation}
    \item[(iii)](\cite[Theorem 2.7]{GfrOut22}) $F$ is strongly metrically regular around $(\xb,\yb)$ if and only if
      \begin{equation}
            \label{EqStrictCrit} 0\in D_*F(\xb,\yb)(u)\ \Rightarrow\ u=0
      \end{equation}
      and \eqref{EqMoCrit} holds.
    \end{enumerate}
\end{theorem}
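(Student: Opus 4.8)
The three parts are all established results, cited in the statement, so the natural plan is to invoke the indicated sources directly. I sketch below the ideas behind each, the most instructive being part (i), and indicate where the genuine work sits.

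For part (i), the Levy--Rockafellar criterion, I would argue by a scaling/contradiction argument tying strong metric subregularity to the graphical derivative through $\gph DF(\xb,\yb)=T_{\gph F}(\xb,\yb)$. For the implication ``subregularity $\Rightarrow$ \eqref{EqLevRoCrit}'', suppose $0\in DF(\xb,\yb)(u)$, i.e. $(u,0)\in T_{\gph F}(\xb,\yb)$; then there are $t_k\downarrow 0$ and $(u_k,v_k)\to(u,0)$ with $\yb+t_kv_k\in F(\xb+t_ku_k)$, whence $\dist{\yb,F(\xb+t_ku_k)}\le t_k\norm{v_k}$. Feeding this into the subregularity estimate $t_k\norm{u_k}=\norm{(\xb+t_ku_k)-\xb}\le\kappa\,\dist{\yb,F(\xb+t_ku_k)}\le\kappa t_k\norm{v_k}$ and dividing by $t_k$ gives $\norm{u_k}\le\kappa\norm{v_k}\to 0$, so $u=0$. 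For the converse I would negate subregularity to obtain $x_k\to\xb$ with $\norm{x_k-\xb}>k\,\dist{\yb,F(x_k)}$, pick $y_k\in F(x_k)$ with $\norm{y_k-\yb}\le 2\,\dist{\yb,F(x_k)}$, and normalize by $t_k:=\norm{x_k-\xb}\downarrow 0$, $u_k:=(x_k-\xb)/t_k$, $v_k:=(y_k-\yb)/t_k$. Then $\norm{u_k}=1$ while $\norm{v_k}<2/k\to 0$, so after passing to a subsequence $u_k\to u$ with $\norm{u}=1$ places $(u,0)$ in $T_{\gph F}(\xb,\yb)$, contradicting \eqref{EqLevRoCrit}.

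Part (ii) is the Mordukhovich coderivative criterion for metric regularity, which I would simply quote from \cite{Mo18}: it is the point-based reformulation of the fact that metric regularity of $F$ around $(\xb,\yb)$ is characterized by $\ker D^*F(\xb,\yb)=\{0\}$, equivalently $(0,-y^*)\in N_{\gph F}(\xb,\yb)\Rightarrow y^*=0$. For part (iii) I would begin from (ii) to secure metric regularity under \eqref{EqMoCrit}, and then use \eqref{EqStrictCrit} to upgrade the resulting localization of $F^{-1}$ to a single-valued one: the condition $0\in D_*F(\xb,\yb)(u)\Rightarrow u=0$ rules out paratingent directions of the form $(u,0)$ with $u\neq0$, which is exactly the local injectivity that forces the metrically regular localization to be single-valued, and then strictly continuous by metric regularity; this is \cite[Theorem 2.7]{GfrOut22}.

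The main obstacle is part (ii): a self-contained proof of the Mordukhovich criterion requires the full variational machinery (the extremal principle together with the coderivative calculus behind the Aubin-property characterization of metric regularity), so within this paper the only reasonable plan is to cite it rather than reprove it. Part (iii) carries a secondary subtlety, namely verifying that \eqref{EqStrictCrit} is precisely the injectivity condition needed to pass from a possibly set-valued metrically regular localization to a single-valued one; the scaling argument used for (i), now applied to two competing graph points sharing the same $x$-coordinate, makes this transparent, but the bookkeeping is what one must get right.
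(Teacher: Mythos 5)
Your proposal is correct and matches the paper's treatment: the paper gives no proof of this theorem at all, relying entirely on the three cited sources (Dontchev--Rockafellar for (i), Mordukhovich for (ii), Gfrerer--Outrata for (iii)), which is exactly what you propose to do. Your additional self-contained sketch of part (i) and your outline of how \eqref{EqStrictCrit} upgrades the metrically regular localization to a single-valued one in (iii) are both accurate, but go beyond what the paper itself supplies.
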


\section{Point-based Characterizations of strict proto-differentiability and consequences}
We start with the following proposition.
\begin{proposition}\label{PropLipMan}
  Assume that $\Omega\subset\R^n$ is locally closed and strictly smooth at $\zb\in \Omega$. Then $\Omega$ is a Lipschitz manifold of dimension $d$ around $\zb$, where $d$ is the dimension of the subspace $T_\Omega(\zb)$. Actually, there is an $n\times n$ permutation matrix $\tilde Q$ and an open neighborhood $V\times U\subset\R^d\times\R^{n-d}$ of $\tilde Q\zb=:(\yb,\xb)\in\R^d\times\R^{n-d}$ such that $\tilde Q\Omega\cap(V\times U)$ is the graph of a Lipschitzian mapping $f:V\to\R^{n-d}$ which is strictly differentiable at $\yb$.
\end{proposition}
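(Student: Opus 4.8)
The plan is to produce the permutation and the local graph directly, extracting from strict smoothness two quantitative estimates: an \emph{upper} (paratingent) estimate that forces single-valuedness and a Lipschitz inverse, and a \emph{lower} (Clarke) estimate that makes the relevant projection onto a full neighborhood. Write $L:=T_\Omega(\zb)$, which by strict smoothness equals $\widehat T_\Omega(\zb)=T^P_\Omega(\zb)$ and is a subspace of some dimension $d$. First I would fix coordinates: since $\dim L=d$, there is an index set of $d$ coordinates onto which $L$ projects isomorphically, so a suitable permutation matrix $\tilde Q$ achieves $\tilde QL=\rge(I_d,A)$ for some $A\in\R^{(n-d)\times d}$. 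Setting $\Omega':=\tilde Q\Omega$ and $\zb':=\tilde Q\zb=(\yb,\xb)$, the tangent cones transform by $\tilde Q$, so $\widehat T_{\Omega'}(\zb')=T^P_{\Omega'}(\zb')=\rge(I_d,A)$.

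Next I would record the two estimates. From $T^P_{\Omega'}(\zb')=\tilde QL$ and a routine compactness argument on the unit sphere (a non-tangential secant direction would lie in the paratingent cone yet outside $\tilde QL$) one gets: for every $\epsilon>0$ there is $\delta>0$ with $\dist{(y'-y,x'-x),\tilde QL}\le\epsilon\norm{(y'-y,x'-x)}$ for all $(y,x),(y',x')\in\Omega'\cap\B_\delta(\zb')$; since $\tilde QL=\rge(I_d,A)$ this reads $\norm{(x'-x)-A(y'-y)}\le(1+\norm A)\epsilon\,\norm{(y'-y,x'-x)}$. Taking $\epsilon$ with $(1+\norm A)\epsilon<1$ and $y=y'$ forces $x=x'$, so over its $y$-projection $\Omega'$ is the graph of a single-valued map $f$; rearranging the same inequality shows $f$ is Lipschitz, and since $\norm{(y'-y,x'-x)}$ is then controlled by $\norm{y'-y}$, letting $\epsilon\downarrow0$ as $y,y'\to\yb$ gives $\norm{f(y')-f(y)-A(y'-y)}=\oo(\norm{y'-y})$, i.e. $f$ is strictly differentiable at $\yb$ with $\nabla f(\yb)=A$ (cf. Lemma~\ref{LemStrictDiff}).

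The main obstacle is to show that the domain of $f$ contains a neighborhood $V$ of $\yb$, i.e. that the projection $(y,x)\mapsto y$ maps $\Omega'$ onto a neighborhood of $\yb$; this is where I would use the \emph{lower} information $\widehat T_{\Omega'}(\zb')=\rge(I_d,A)$, which the upper estimate alone does not provide. A compactness argument turns the inner-limit definition of the Clarke tangent cone into a uniform reachability statement: for every $\epsilon>0$ there is $\rho>0$ such that for every $z\in\Omega'\cap\B_\rho(\zb')$ and every $w\in\rge(I_d,A)$ with $\norm w\le\rho$ there is $z''\in\Omega'$ with $\norm{z''-z-w}\le\epsilon\norm w$. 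Given a target $y^\ast$ near $\yb$, I would then run an Euler-type iteration starting at $\zb'$: at $z_j=(y_j,x_j)$ take the step $w_j:=(y^\ast-y_j,A(y^\ast-y_j))\in\rge(I_d,A)$ and let $z_{j+1}$ be the reachable point, so the residual gap satisfies $\norm{y^\ast-y_{j+1}}\le\epsilon\norm{w_j}\le(1+\norm A)\epsilon\,\norm{y^\ast-y_j}$. Choosing $\epsilon$ with $(1+\norm A)\epsilon<\tfrac12$ makes the gaps contract geometrically and the steps summable, so $z_j$ is Cauchy and, by local closedness, converges to some $z^\ast=(y^\ast,x^\ast)\in\Omega'$ with prescribed $y$-coordinate $y^\ast$; the total displacement is bounded by a constant times $\norm{y^\ast-\yb}$, which keeps the iteration inside $\B_\rho(\zb')$ and determines the neighborhood $V$.

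Putting these together, on $V\times U$ (with $U$ a small ball around $\xb$, so that the Lipschitz $f$ maps $V$ into $U$) the set $\tilde Q\Omega$ coincides with the graph of $f:V\to\R^{n-d}$, which is strictly differentiable at $\yb$; this is exactly the asserted Lipschitz-manifold structure of dimension $d=\dim T_\Omega(\zb)$, consistent with Lemma~\ref{LemDimLipMan}. The genuinely delicate point is the reachability iteration of the third paragraph: single-valuedness, the Lipschitz bound and strict differentiability all drop out mechanically from the paratingent estimate, whereas surjectivity of the projection crucially needs the Clarke tangent cone to be the \emph{same} full subspace $L$, which is precisely the extra content of strict smoothness beyond $T_\Omega(\zb)=T^P_\Omega(\zb)$ being a graphical subspace.
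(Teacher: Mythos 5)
Your proof is correct, but it follows a genuinely different route from the paper's. The paper permutes coordinates so that the last $d$ rows of a basis matrix of $T_\Omega(\zb)$ are independent, views $Q\Omega$ as the graph of a set-valued map $G:\R^{n-d}\tto\R^d$, and verifies via the point-based criteria of Theorem~\ref{ThCharRegByDer} (the Mordukhovich criterion on $\gph D^*G$ together with the strict-derivative condition) that $G$ is \emph{strongly metrically regular}; the single-valued Lipschitz localization $f=G^{-1}$ then comes for free from the definition of that property, and strict differentiability follows from Lemma~\ref{LemStrictDiff}. You instead work entirely with primal tangent objects and inline the regularity argument: the paratingent estimate $\dist{z'-z,L}\le\epsilon\norm{z'-z}$ delivers single-valuedness, the Lipschitz bound and strict differentiability in one stroke (this is essentially the strict-derivative half of the paper's criterion), while your Euler-type reachability iteration built from the uniform characterization of the Clarke tangent cone is a hands-on Lyusternik--Graves scheme replacing the Mordukhovich criterion (to which it is tied, under strict smoothness, by the polarity in Lemma~\ref{LemStrictSmooth}). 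The trade-off is clear: the paper's proof is shorter because it outsources the hard analysis to the cited strong metric regularity theorem, whereas yours is self-contained, avoids normal cones and coderivatives altogether, and makes transparent exactly which half of strict smoothness ($T^P_\Omega(\zb)$ being no larger than $L$ versus $\widehat T_\Omega(\zb)$ being no smaller) powers which conclusion. All the steps you flag as delicate do go through: the secant estimate follows from the stated compactness argument on the unit sphere, the uniform reachability statement is the standard local-closedness characterization of the Clarke cone made uniform over the compact unit sphere of $L$, and the geometric contraction keeps the iterates in the ball where reachability applies.
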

\begin{proof}
  Consider an $n\times d$ matrix $Z$ whose columns form a basis for $T_\Omega(\zb)$. Then the row rank of $Z$ is also $d$  and we can find an $n\times n$ permutation matrix $Q$ such that the last $d$ rows of $QZ$ are linearly independent. Thus, if we partition the $n\times d$ matrix $QZ$ into  matrices $A\in\R^{(n-d)\times d}$ and $B\in\R^{d\times d}$, $QZ=\left(\begin{smallmatrix}A\\B\end{smallmatrix}\right)$, the matrix $B$ is nonsingular. Consider the mapping $G:\R^{n-d}\tto\R^d$ given by $\gph G=Q\Omega$ and we claim that $G$ is strongly metrically regular around $Q\zb=:(\xb,\yb)\in\R^{n-d}\times\R^d$. Indeed, since the $n\times n$ matrix $Q$ is nonsingular, we have
  \[T_{\gph G}(\xb,\yb)=QT_\Omega(\zb)=QT^P_\Omega(\zb)=T^P_{\gph G}(\xb,\yb)=\gph D_*G(\xb,\yb)=\rge (A,B)=\rge(AB^{-1},I_d)\]
 and consequently, by taking into account the identity $\big(QT^P_\Omega(\zb)\big)^\perp=Q^{-T}T^P_\Omega(\zb)^\perp=QT^P_\Omega(\zb)^\perp=QN_\Omega(\zb)$,
 \begin{gather*}N_{\gph G}(\xb,\yb)= Q^{-T}N_\Omega(\zb)=QN_\Omega(\zb)=\big(QT^P_\Omega(\zb)\big)^\perp=\rge \big(AB^{-1},I_d\big)^\perp=\rge\big(I_{n-d},-B^{-T}A^T\big),\\
  \gph D^*G(\xb,\yb)=S_{n-d,d}N_{\gph G}(\xb,\yb)=\rge\big(B^{-T}A^T, I_{n-d}\big).
 \end{gather*}
 Thus, for every element $(u,0)\in\gph D_*G(\xb)(\xb,\yb)$ we have $u=0$ and for every $(v^*,0)\in\gph D^*G(\xb,\yb)$ we have $v^*=0$ and we conclude from Theorem \ref{ThCharRegByDer} that $G$ is strongly metrically regular around $(\xb,\yb)$. Thus there exists an open neighborhood $U\times V$ of $(\xb,\yb)$ and a Lipschitzian mapping $f:V\to\R^{n-d}$ such that $\gph f=\gph G^{-1}\cap (V\times U)$. By defining  the permutation matrix
 \[\tilde Q:=\begin{pmatrix}0&I_d\\I_{n-d}&0\end{pmatrix}Q\]
 and the  orthogonal transformation $\Phi(z)=\tilde Q z$,
 we obtain that
 \[\gph f=\gph G^{-1}\cap(V\times U)= \Phi(\Omega)\cap(V\times U)=\Phi\big(\Omega\cap\Phi^{-1}(V\times U)\big)\]
 and, since $\Phi^{-1}(V\times U)$ is an open neighborhood of $\zb$, it  follows that $\Omega$ is a $d$-dimensional Lipschitz manifold. Since $\Omega$ is strictly smooth, $T^P_\Omega(\xb)$ and consequently $T^P_{\gph f}(\yb,\xb)=\gph D_*f(\yb)$ are subspaces and the assertion about the strict differentiability of $f$ at $\yb$ follows from Lemma \ref{LemStrictDiff}.
\end{proof}

We are now in the position to state point-based characterizations of strict smoothness of sets and strict proto-differentiability of mappings, respectively.

\begin{theorem}\label{ThStrictlySmooth}
  Assume that the set $\Omega\subset\R^n$ is locally closed at $\zb\in\Omega$. Then the following statements are equivalent:
  \begin{enumerate}
    \item[(i)] $\Omega$ is strictly smooth at $\zb$.
    \item[(ii)] There is some nonnegative integer $d$ such that $\Omega$ is a Lipschitz manifold of dimension $d$ around $\zb$ and $\widehat T_\Omega(\zb)$ is an $d$-dimensional subspace.
    \item[(iii)]  $\Omega$ is a Lipschitz manifold around $\zb$  and $T_\Omega^P(\zb)$ is a subspace.
    \item[(iv)]  $\Omega$ is a Lipschitz manifold around $\zb$ and $N_\Omega(\zb)$ is a subspace.
    \item[(v)] $\Omega$ is a Lipschitz manifold around $\zb$ and normally regular at $\zb$.
  \end{enumerate}
  In every case there holds $\dim T^P_\Omega(\zb)=\dim T_\Omega(\zb)= \dim \widehat T_\Omega(\zb)=d$ and $\dim N_\Omega(\zb)=n-d$, where $d$ is the dimension of the Lipschitz manifold $\Omega$.
\end{theorem}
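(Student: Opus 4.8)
The plan is to prove (i)$\Rightarrow$(ii),(iii),(iv),(v) in one stroke and then to show that each of (ii)--(v) implies (i), exploiting throughout that once $\Omega$ is known to be a Lipschitz manifold we may pass, via the change of coordinates $\Phi$ of Definition \ref{DefLipMan}, to the associated single-valued Lipschitzian mapping $f$ and invoke the characterizations of strict differentiability collected in Lemma \ref{LemStrictDiff}. For the forward direction, if $\Omega$ is strictly smooth at $\zb$ then by \eqref{EqStrictSmoothPrim} the cones $\widehat T_\Omega(\zb)=T_\Omega(\zb)=T^P_\Omega(\zb)$ coincide and form a subspace, and by Proposition \ref{PropLipMan} the set $\Omega$ is a Lipschitz manifold of dimension $d:=\dim T_\Omega(\zb)$ around $\zb$; this already yields (ii) and (iii). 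Lemma \ref{LemStrictSmooth} additionally gives that $N_\Omega(\zb)=T^P_\Omega(\zb)^\perp$ is a subspace and that $\Omega$ is normally regular, hence (iv) and (v).

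For the reverse implications I would set $(\bar u,f(\bar u))=\Phi(\zb)$ and recall that $f$ is Lipschitzian, hence strictly continuous at $\bar u$. Using \eqref{EqTangConeChangeCoord} and \eqref{EqNormalConeChangeCoord}, invertibility of $\nabla\Phi(\zb)$ transfers the relevant properties between $\Omega$ at $\zb$ and $\gph f$ at $\Phi(\zb)$: $T^P_\Omega(\zb)$ is a subspace iff $\gph D_*f(\bar u)=T^P_{\gph f}(\Phi(\zb))$ is one, $N_\Omega(\zb)$ is a subspace iff $\gph D^*f(\bar u)$ is, and $\Omega$ is normally regular at $\zb$ iff $f$ is graphically regular at $\bar u$. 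Thus (iii), (iv) and (v) translate into the hypotheses (vii), (viii) and (iii) of Lemma \ref{LemStrictDiff}, respectively, which together with strict continuity force $f$ to be strictly differentiable at $\bar u$; the equivalence (i)$\Leftrightarrow$(ii) of that lemma then makes $f$ strictly proto-differentiable, i.e.\ $\gph f$ strictly smooth at $\Phi(\zb)$, which pulls back to strict smoothness of $\Omega$ at $\zb$ through \eqref{EqStrictSmoothPrim} and the invertibility of $\nabla\Phi(\zb)$.

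The implication (ii)$\Rightarrow$(i) is the one demanding genuine work, since the subspace property of the inner cone $\widehat T_\Omega(\zb)$ does not appear directly among the conditions of Lemma \ref{LemStrictDiff}, and this is where the prescribed dimension $d$ is indispensable: the graph of $t\mapsto|t|$ is a one-dimensional Lipschitz manifold whose regular tangent cone at the corner is the subspace $\{0\}$, yet it is not strictly smooth. Transferring to $f$, the cone $\widehat T_{\gph f}(\bar u)=\nabla\Phi(\zb)\widehat T_\Omega(\zb)$ is a $d$-dimensional subspace which, being contained in $T^P_{\gph f}(\bar u)=\gph D_*f(\bar u)$, cannot meet $\{0\}\times\R^{n-d}$ (strict continuity of $f$ excludes nonzero ``vertical'' elements of $D_*f(\bar u)$, exactly as in the proof of Lemma \ref{LemDimSubspSingleValued}(i)); hence it is the graph $\rge(I_d,A)$ of a linear map $A$ and projects isomorphically onto $\R^d$. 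Since the same strict continuity shows that the projection $(u,v)\mapsto u$ is injective on $T^P_{\gph f}(\bar u)$, while $\widehat T_{\gph f}(\bar u)\subset T^P_{\gph f}(\bar u)$ already surjects onto $\R^d$, a uniqueness argument forces $T^P_{\gph f}(\bar u)=\widehat T_{\gph f}(\bar u)$; thus $\gph f$ is strictly smooth at $\Phi(\zb)$ and, pulling back, so is $\Omega$ at $\zb$.

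Finally, the dimension formulas follow from \eqref{EqStrictSmoothPrim} together with Lemma \ref{LemDimLipMan}: strict smoothness makes $T^P_\Omega(\zb)=T_\Omega(\zb)=\widehat T_\Omega(\zb)$ a subspace of dimension $d$, and the accompanying normal cone $N_\Omega(\zb)$ a subspace of dimension $n-d$. The only real obstacle in the whole argument is the projection/dimension step for (ii)$\Rightarrow$(i); everything else is a transfer of known single-valued facts across the coordinate change $\Phi$.
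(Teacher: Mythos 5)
Your overall architecture matches the paper's: (i) together with Proposition \ref{PropLipMan} and Lemma \ref{LemStrictSmooth} gives (ii)--(v), and for (iii), (iv), (v) you transfer to the single-valued Lipschitzian $f$ via $\Phi$ and invoke Lemma \ref{LemStrictDiff} (the paper routes (iv) through (ii) instead of through Lemma \ref{LemStrictDiff}(viii), but your variant is equally valid). The one place where you depart from the paper is (ii)$\Rightarrow$(i): the paper simply cites Rockafellar \cite[Theorem 3.5(c)]{Ro85}, whereas you attempt a self-contained argument --- and that argument has a genuine gap. You claim that strict continuity of $f$ makes the projection $(u,v)\mapsto u$ injective on $T^P_{\gph f}(\bar u)$, ``exactly as in the proof of Lemma \ref{LemDimSubspSingleValued}(i).'' But that lemma derives single-valuedness of $D_*f(\bar u)$ from the \emph{assumption} that $\gph D_*f(\bar u)$ is a subspace: the Lipschitz bound only excludes elements $(0,v)$ with $v\neq 0$, and to pass from two elements $(u,v),(u,v')$ of the cone $T^P_{\gph f}(\bar u)$ to the vertical element $(0,v-v')$ you need closedness under differences, which is precisely the subspace property you are trying to establish. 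The claim is false for a general Lipschitz $f$: for $f(t)=|t|$ the paratingent cone at the origin is $\{(u,v)\mv |v|=|u|\}$, which contains both $(1,1)$ and $(1,-1)$. (In that example hypothesis (ii) fails because $\widehat T$ is $0$-dimensional, but the point is that your stated reason for injectivity --- strict continuity alone --- does not deliver it.)

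The gap is repairable without leaving your framework. Use the additivity property of the regular tangent cone with respect to the paratingent cone: if $w\in\widehat T_\Omega(\zb)$ and $p\in T^P_\Omega(\zb)$, pick $z_k\setto{\Omega}\zb$, $t_k\downarrow 0$, $p_k\to p$ with $z_k+t_kp_k\in\Omega$; since $z_k':=z_k+t_kp_k\setto{\Omega}\zb$, the definition of $\widehat T_\Omega(\zb)$ as a lower limit yields $w_k\to w$ with $z_k'+t_kw_k\in\Omega$, whence $p+w\in T^P_\Omega(\zb)$. Thus $\widehat T_\Omega(\zb)+T^P_\Omega(\zb)\subset T^P_\Omega(\zb)$. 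Applied to $\gph f$: writing $\widehat T_{\gph f}(\bar u)=\rge(I_d,A)$ (your argument for this part is fine), any $(u,v)\in T^P_{\gph f}(\bar u)$ satisfies $(0,v-Au)=(u,v)+(-(u,Au))\in T^P_{\gph f}(\bar u)$ because $-\widehat T_{\gph f}(\bar u)=\widehat T_{\gph f}(\bar u)$, and now the no-vertical-elements property forces $v=Au$. This gives $T^P_{\gph f}(\bar u)=\widehat T_{\gph f}(\bar u)$ and the rest of your pull-back goes through. Everything else in your write-up --- the forward implications, the transfers via \eqref{EqTangConeChangeCoord}--\eqref{EqNormalConeChangeCoord}, and the dimension formulas from \eqref{EqStrictSmoothPrim} and Lemma \ref{LemDimLipMan} --- is correct and essentially identical to the paper.
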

\begin{proof}
  The implications  (i)$\Rightarrow$(iii), (i)$\Rightarrow$(iv) and (i)$\Rightarrow$(v) follow from Proposition \ref{PropLipMan}, \eqref{EqStrictSmoothPrim} and Lemma \ref{LemStrictSmooth}. From Lemma \ref{LemDimLipMan} and \eqref{EqTangNormal} we can deduce that (iv)$\Rightarrow$(ii). The implication (ii)$\Rightarrow$(i) follows from a result by Rockafellar \cite[Theorem 3.5(c)]{Ro85}. Thus it remains to show that (iii)$\Rightarrow$(i) and (v)$\Rightarrow$(i). Assume that either (iii) or (v) holds. In both cases,  $\Omega$ is a Lipschitzian manifold around $\zb$. Let  $\Phi,\ U$ and $f$ be as in Definition \ref{DefLipMan} and let $\xb\in U$ be given by $\Phi(\zb)=(\xb,f(\xb))$. If (iii) holds then $\gph D_*f(\xb)=T^P_{\gph f}(\Phi(\zb))=\nabla\Phi(\zb)T_\Omega^P(\zb)$
  is a  subspace of $\R^n$ and hence $f$ is strictly proto-differentiable at $\xb$ by Lemma \ref{LemStrictDiff}. On the other hand, if (v) holds then $N_{\gph f}(\Phi(\zb))=\nabla \Phi(\zb)^TN_\Omega(\zb)=\nabla \Phi(\zb)^T\widehat N_\Omega(\zb)=\widehat N_{\gph f}(\Phi(\zb))$ implying that $f$ is graphically regular at $\xb$ and consequently  also strictly proto-differentiable by Lemma \ref{LemStrictDiff}. Thus, $\gph f$ is strictly smooth at $\Phi(\zb)$ and $T^P_{\gph f}(\Phi(\zb))=\widehat T_{\gph f}(\Phi(\zb))$ is a subspace. Hence $T^P_\Omega(\zb)=\nabla \Phi(\zb)^{-1}T^P_{\gph f}(\Phi(\zb))= \nabla \Phi(\zb)^{-1}\widehat T_{\gph f}(\Phi(\zb))=\widehat T_\Omega(\zb)$ is a subspace and it follows that $\Omega$ is strictly smooth at $\zb$.

  The assertions about the dimensions of the tangent cones and the limiting normal cone follow from \eqref{EqStrictSmoothPrim} and Lemma \ref{LemDimLipMan}.
  \end{proof}

\begin{corollary}\label{CorStrictlyProt}
  Assume that the mapping $F:\R^n\tto\R^m$ has locally closed graph at $(\xb,\yb)\in\gph F$. Then the following statements are equivalent:
  \begin{enumerate}
    \item[(i)] $F$ is strictly proto-differentiable at $\xb$ for $\yb$.
    \item[(ii)] $F$ is graphically strictly differentiable at $(\xb,\yb)$.
    \item[(iii)] $F$ is graphically Lipschitzian of dimension $d$ around $(\xb,\yb)$ and $\widehat T_{\gph F}(\xb,\yb)$ is a $d$-dimensional subspace.
    \item[(iv)] $F$ is graphically Lipschitzian  around $(\xb,\yb)$ and $\gph D_*F(\xb,\yb)$ is a  subspace.
    \item[(v)] $F$ is graphically Lipschitzian around $(\xb,\yb)$ and $\gph D^*F(\xb,\yb)$ is a subspace.
    \item[(vi)] $F$ is both graphically Lipschitzian around $(\xb,\yb)$ and graphically regular at $(\xb,\yb)$.
    \item[(vii)] $F$ is graphically Lipschitzian around $(\xb,\yb)$ and $\tilde \Sp F(\xb,\yb)$ is a singleton.
    \item[(viii)] $F$ is graphically Lipschitzian around $(\xb,\yb)$ and $\tilde \Sp^* F(\xb,\yb)$ is a singleton.
  \end{enumerate}
  In every case there holds $\dim \gph D_* F(\xb,\yb)=\dim \gph DF(\xb,\yb)=\dim \widehat T_{\gph F}(\xb,\yb)=d$, $\dim \gph D^*F(\xb,\yb)=n+m-d$,  where $d$ is the dimension of the graphical Lipschitz property of $F$, and $\tilde \Sp F(\xb,\yb)=\{\gph DF(\xb,\yb)\}$,  $\tilde \Sp^* F(\xb,\yb)=\{\gph D^*F(\xb,\yb)\}$.
\end{corollary}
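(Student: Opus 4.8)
The plan is to reduce the entire statement to Theorem \ref{ThStrictlySmooth} applied to the set $\Omega:=\gph F\subset\R^{n+m}$ (so that the ambient dimension $n$ in that theorem plays the role of $n+m$ here), supplemented by the chart function $f$ coming from the Lipschitz–manifold representation. First I would record the dictionary between set–properties of $\gph F$ and mapping–properties of $F$ supplied by Definitions \ref{DefStrictProto} and \ref{DefLipMan}: strict proto-differentiability of $F$ is strict smoothness of $\gph F$; graphical regularity of $F$ is normal regularity of $\gph F$; graphical Lipschitzness of dimension $d$ is the Lipschitz–manifold property of dimension $d$; and $\gph D_*F(\xb,\yb)=T^P_{\gph F}(\xb,\yb)$, while $\gph D^*F(\xb,\yb)=S_{nm}N_{\gph F}(\xb,\yb)$ with $S_{nm}$ orthogonal, so $\gph D^*F(\xb,\yb)$ is a subspace exactly when $N_{\gph F}(\xb,\yb)$ is, of the same dimension. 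Under this translation, statements (i), (iii), (iv), (v), (vi) of the corollary are word-for-word statements (i), (ii), (iii), (iv), (v) of Theorem \ref{ThStrictlySmooth}, so their mutual equivalence and the dimension formulas $\dim\gph D_*F=\dim\gph DF=\dim\widehat T_{\gph F}=d$, $\dim\gph D^*F=n+m-d$ are immediate.

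It then remains to fold in the three statements (ii), (vii), (viii) that involve the chart function. I would assume $F$ graphically Lipschitzian of dimension $d$ around $(\xb,\yb)$ (this is part of each of these statements, and by the first paragraph also a consequence of (i)), take $\Phi,f$ as in Definition \ref{DefLipMan}, and set $(\bar u,f(\bar u))=\Phi(\xb,\yb)$; since $f$ is Lipschitzian it is strictly continuous, so Lemma \ref{LemStrictDiff} applies to $f$ at $\bar u$. The key observation is that each chart quantity is the image of the corresponding quantity for $F$ under the nonsingular linear map $\nabla\Phi(\xb,\yb)$: by \eqref{EqTransGraphDer} one has $\gph D_*f(\bar u)=\nabla\Phi(\xb,\yb)\gph D_*F(\xb,\yb)$, and by \eqref{EqTransSCDDer} one has $\tilde\Sp f(\bar u)=\nabla\Phi(\xb,\yb)\tilde\Sp F(\xb,\yb)$ and its adjoint analogue. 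Such a bijective linear image preserves the subspace property, the singleton property, and dimension, whence $f$ is strictly differentiable at $\bar u$ iff $\gph D_*F(\xb,\yb)$ is a subspace (this is (iv)), iff $\tilde\Sp F(\xb,\yb)$ is a singleton (with graphical Lipschitzness this is (vii)), iff $\tilde\Sp^*F(\xb,\yb)$ is a singleton (this is (viii)), using the equivalences (i),(v),(vi),(vii) of Lemma \ref{LemStrictDiff} for $f$. Since Definition \ref{DefLipMan}(iii) of graphical strict differentiability is precisely ``$\gph F$ a Lipschitz manifold and $f$ strictly differentiable at $\bar u$'', statement (ii) is thereby tied into the same chain. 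The residual identities $\tilde\Sp F(\xb,\yb)=\{\gph DF(\xb,\yb)\}$ and $\tilde\Sp^*F(\xb,\yb)=\{\gph D^*F(\xb,\yb)\}$ I would obtain by transporting the corresponding singleton identities of Lemma \ref{LemStrictDiff} for $f$ back through $\nabla\Phi(\xb,\yb)^{-1}$ via \eqref{EqTransGraphDer} and \eqref{EqTransSCDDer}.

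The one point needing genuine care — the main obstacle — is that Definition \ref{DefLipMan}(iii) refers to ``the'' chart $(\Phi,f)$, so I must verify that strict differentiability of $f$ at $\bar u$ does not depend on the particular Lipschitz–manifold representation chosen. This is exactly what the transformation formula $\gph D_*f(\bar u)=\nabla\Phi(\xb,\yb)\gph D_*F(\xb,\yb)$ settles, since it ties strict differentiability of $f$ (through Lemma \ref{LemStrictDiff}) to the manifestly chart-independent condition that $\gph D_*F(\xb,\yb)$ be a subspace. Everything else is bookkeeping layered on top of Theorem \ref{ThStrictlySmooth} and Lemma \ref{LemStrictDiff}; as an alternative route for (i)$\Rightarrow$(ii) one could instead invoke Proposition \ref{PropLipMan} directly, which already produces a representation with strictly differentiable $f$.
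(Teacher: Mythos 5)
Your proposal is correct and follows essentially the same route as the paper: the set-level statements (i), (iii)--(vi) are handled by applying Theorem \ref{ThStrictlySmooth} to $\Omega=\gph F$, and the chart-dependent statements (ii), (vii), (viii) are folded in via the transformation formulas \eqref{EqTransGraphDer}--\eqref{EqTransSCDDer} together with Lemma \ref{LemStrictDiff} applied to $f$ at $\bar u$. Your explicit remark that the transformation formula makes strict differentiability of $f$ chart-independent is a point the paper leaves implicit, but it is resolved in exactly the way you describe.
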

\begin{proof}In order to prove the equivalences between the assertions (ii)--(viii) just note that in every case the mapping $F$ is graphically Lipschitzian around $(\xb,\yb)$. Let $\Phi, U$ and $f$ be as in Definition \ref{DefLipMan} and let $\bar u\in U$ be given by $\Phi(\xb,\yb)=(\bar u,f(\bar u))$. Then, by \eqref{EqTransGraphDer}-\eqref{EqTransSCDDer} and Lemma \ref{LemStrictDiff}, all the statements (iii)-(viii) are equivalent to strict differentiability of $f$ at $\bar u$ which in turn is (ii) by definition. The equivalences between (i) and any of the statements (iii)-(vi) follow from Theorem \ref{ThStrictlySmooth}.
\end{proof}
\begin{remark}
  \begin{enumerate}
    \item We have only stated point-based characterizations of strict smoothness and strict proto-differentiability, but the list of equivalences in Theorem \ref{ThStrictlySmooth} and Corollary \ref{CorStrictlyProt} can be extended. E.g., by combining Proposition \ref{PropLipMan} with \cite[Theorem 2.3]{HaSa23} one immediately obtains that a set $\Omega\subset\R^n$, locally closed at $\zb\in \Omega$, is strictly smooth at $\zb$ if and only if $\Omega$ is a Lipschitz manifold around $\zb$ and $\lim_{z\setto\Omega\zb}T_\Omega(z)= T_\Omega(\zb)$.
    \item The equivalence (i)$\Leftrightarrow$(ii) in Theorem \ref{ThStrictlySmooth} sharpens  Rockafellar's characterization \cite[Theorem 3.5]{Ro85} of strictly smooth sets insofar as the Lipschitz manifold property of the set has not to be imposed as an assumption.
  \end{enumerate}
\end{remark}
Strictly smooth sets and strictly proto-differentiable mappings are always \ssstar at the reference point.
\begin{theorem}\begin{enumerate}\item[(i)] Assume that the set $\Omega\subset\R^n$ is locally closed and strictly smooth at $\zb\in\Omega$. Then $\Omega$ is also \ssstar at $\zb$.
\item[(ii)] Assume that the mapping $F:\R^n\tto\R^m$ has locally closed graph at $(\xb,\yb)\in\gph F$ and is strictly proto-differentiable at $\xb$ for $\yb$. Then $F$ is \ssstar at $(\xb,\yb)$.
\end{enumerate}
\end{theorem}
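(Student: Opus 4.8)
The plan is to establish (i) directly from the subspace--polarity structure provided by Lemma \ref{LemStrictSmooth}, and then to deduce (ii) immediately by applying (i) to the set $\Omega=\gph F$ at the point $\zb=(\xb,\yb)$, since the \ssstar property of a mapping is, by definition, the \ssstar property of its graph.

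For (i), set $L:=T^P_\Omega(\zb)$. Strict smoothness means, via \eqref{EqStrictSmoothPrim}, that $L=\widehat T_\Omega(\zb)=T_\Omega(\zb)$ is a subspace, and Lemma \ref{LemStrictSmooth} supplies the polarity $N_\Omega(\zb)=L^\perp$. Everything then reduces to two asymptotic alignment facts, both phrased uniformly on small balls: (a) for every $\eta>0$ there is $\delta>0$ with $\dist{(z-\zb)/\norm{z-\zb},L}<\eta$ for all $z\in\Omega\cap\B_\delta(\zb)$, $z\not=\zb$; and (b) for every $\eta>0$ there is $\delta>0$ with $\dist{z^*/\norm{z^*},L^\perp}<\eta$ for all $z\in\Omega\cap\B_\delta(\zb)$ and all $z^*\in N_\Omega(z)$ with $z^*\not=0$. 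Granting these, I split both $z-\zb$ and $z^*$ into their orthogonal components along $L$ and $L^\perp$. The two mixed terms in $\skalp{z^*,z-\zb}$ vanish because $L\perp L^\perp$, while (a) gives $\norm{P_{L^\perp}(z-\zb)}\leq\eta\norm{z-\zb}$ and (b) gives $\norm{P_Lz^*}\leq\eta\norm{z^*}$; two applications of the Cauchy--Schwarz inequality then yield $\vert\skalp{z^*,z-\zb}\vert\leq 2\eta\norm{z^*}\norm{z-\zb}$, and the choice $\eta=\epsilon/2$ is exactly the required estimate.

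The substance lies in (a) and (b), which I would each prove by contradiction using compactness of the unit sphere. For (a): if $z_k\to\zb$ in $\Omega$ with $(z_k-\zb)/\norm{z_k-\zb}\to v$, then $t_k:=\norm{z_k-\zb}\downarrow0$ witnesses $v\in T_\Omega(\zb)=L$, so no sequence can stay a fixed distance $\eta$ away from $L$. For (b): the limiting normal cone mapping $z\mapsto N_\Omega(z)$ is outer semicontinuous, whence every cluster point of normalized normals $z_k^*/\norm{z_k^*}$ taken at points $z_k\to\zb$ lies in $\limsup_{z\to\zb}N_\Omega(z)\subseteq N_\Omega(\zb)=L^\perp$, again precluding a fixed positive distance to $L^\perp$.

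I expect the main obstacle to be (b): the normals $z^*$ are taken in the limiting normal cone at the \emph{moving} base points $z$, not at $\zb$, so controlling them requires the outer semicontinuity of the limiting normal cone together with the crucial identification $N_\Omega(\zb)=L^\perp$. This is precisely where strict smoothness is indispensable, for it is Lemma \ref{LemStrictSmooth} that pins $N_\Omega(\zb)$ down to the \emph{full} orthogonal complement of the tangent subspace $L$; asymptotic membership in $N_\Omega(\zb)$ then translates into asymptotic orthogonality to the very subspace $L$ that captures all displacement directions. As an alternative one could instead invoke Proposition \ref{PropLipMan} to reduce, through an orthogonal change of coordinates under which the \ssstar inequality is invariant, to the graph of a mapping $f$ strictly differentiable at $\yb$, and then combine the B-Jacobian characterization of Proposition \ref{PropNewtondiff} with the outer semicontinuity of $\onabla f$; the direct argument above, however, sidesteps this machinery entirely.
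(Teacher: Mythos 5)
Your proof is correct and rests on exactly the same ingredients as the paper's: cluster points of normalized displacements $(z_k-\zb)/\norm{z_k-\zb}$ lie in $T_\Omega(\zb)$, cluster points of normalized limiting normals lie in $N_\Omega(\zb)$, and these two sets are orthogonal under strict smoothness by Lemma \ref{LemStrictSmooth}. The paper merely packages this as a single contraposition of the semismooth$^{*}$ inequality (extracting one violating sequence and passing to the limit), which lets it skip your two uniform alignment claims and the orthogonal-decomposition/Cauchy--Schwarz step; part (ii) is reduced to (i) via the graph in both arguments.
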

\begin{proof}
  We prove assertion (i) by contraposition. Assume that $\Omega$ is not \ssstar at $\zb$ and there exists some $\epsilon>0$ together with sequences $z_k\setto{\Omega}\zb$ and $z_k^*\in N_\Omega(z_k)$ such that
  $\vert \skalp{z_k^*,z_k-\zb}\vert>\epsilon\norm{z_k^*}\norm{z_k-\zb}$. By passing to a subsequence if necessary we may assume that the sequence $(z_k-\zb)/\norm{z_k-\zb}$ converge to some $w\in T_\Omega(\zb)\subset T_\Omega^P(\zb)$ and the sequence $z_k^*/\norm{z_k^*}$ converge to some $z^*\in N_\Omega(\zb)$. Then $\vert\skalp{z^*,w}\vert>\epsilon$ and consequently $\Omega$ is not strictly smooth at $\zb$ by Lemma \ref{LemStrictSmooth}. This proves (i) and the second assertion follows immediately from the definitions.
\end{proof}
Under strict proto-differentiability, various regularity properties are equivalent.

\begin{theorem}\label{ThStrictProtoReg}
  Assume that $F:\R^n\tto\R^m$ is strictly proto-differentiable at $\xb$ for $\yb\in F(\xb)$ and assume that $\gph F$ is locally closed at $(\xb,\yb)$. Then the following statements are equivalent:
  \begin{enumerate}
  \item[(i)] $F$ is strongly metrically regular around $(\xb,\yb)$.
  \item[(ii)] $F$ is graphically Lipschitzian of dimension $m$  and metrically regular around $(\xb,\yb)$.
  \item[(iii)] $F$ is graphically Lipschitzian of dimension $m$  around $(\xb,\yb)$ and strongly metrically subregular at $(\xb,\yb)$.
  \item[(iv)] There is an $n\times m$ matrix $C$ such that
  \[\gph D^*F(\xb,\yb)=\rge(C^T, I_n).\]
  \item[(v)] There is an $n\times m$ matrix $C$ such that
  \[\gph DF(\xb,\yb)=\rge(C,I_m).\]
  \end{enumerate}
  In every case $F^{-1}$ has a single-valued localization around $(\yb,\xb)$ which is strictly differentiable at $\yb$.
\end{theorem}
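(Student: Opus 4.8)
The plan is to collapse every condition to a single linear-algebraic statement about the subspace $L:=\gph DF(\xb,\yb)$. Since $F$ is strictly proto-differentiable, \eqref{EqStrictProtPrim} gives $\gph D_*F(\xb,\yb)=\gph DF(\xb,\yb)=L$, and Corollary~\ref{CorStrictlyProt} guarantees that $F$ is graphically Lipschitzian of some dimension $d$ around $(\xb,\yb)$, that $L$ is a $d$-dimensional subspace, and that $\gph D^*F(\xb,\yb)=L^*$ is a subspace of dimension $n+m-d$. Moreover, by Lemma~\ref{LemDimLipMan} the dimension of the Lipschitz manifold $\gph F$ must equal $\dim T^P_{\gph F}(\xb,\yb)=\dim L=d$, so the graphical Lipschitz dimension is uniquely determined and the hypothesis ``graphically Lipschitzian of dimension $m$'' in (ii) and (iii) simply means $d=m$.

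First I would rewrite the point-based criteria of Theorem~\ref{ThCharRegByDer} in terms of the canonical projection $\pi:\R^n\times\R^m\to\R^m$ restricted to $L$. Because $D_*F=DF$ here, the Levy--Rockafellar criterion reads $(u,0)\in L\Rightarrow u=0$, i.e.\ $\pi|_L$ is injective; call this~(A). Using $(y^*,0)\in L^*\Leftrightarrow(0,-y^*)\in L^\perp\Leftrightarrow y^*\perp\pi(L)$, the Mordukhovich criterion $(y^*,0)\in L^*\Rightarrow y^*=0$ becomes $\pi(L)=\R^m$, i.e.\ $\pi|_L$ is surjective; call this~(B). By Theorem~\ref{ThCharRegByDer}(iii), strong metric regularity (i) is exactly (A) $\wedge$ (B), that is, $\pi|_L$ is bijective.

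Next I would match (iv) and (v) to the same bijectivity. Writing $\rge(C,I_m)=\{(Cp,p)\mv p\in\R^m\}$ exhibits (v) as the statement that $L$ is a graph over $\R^m$, i.e.\ $\pi|_L$ bijective; and computing $L^\perp=\rge(I_n,-C^T)$ and applying $S_{nm}$ yields $L=\rge(C,I_m)\Leftrightarrow L^*=\rge(C^T,I_n)$, so (iv) $\Leftrightarrow$ (v). Hence (i) $\Leftrightarrow$ (iv) $\Leftrightarrow$ (v). For (ii) and (iii) the key is a dimension count: since $d=m$ forces $\dim L=m$, a linear map $\pi|_L:L\to\R^m$ that is surjective (resp.\ injective) between spaces of equal dimension is automatically bijective. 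Thus (ii) $=\{d=m\}\wedge$(B) and (iii) $=\{d=m\}\wedge$(A) each imply $\pi|_L$ bijective, while conversely bijectivity gives $\dim L=m$ together with both (A) and (B); this closes the cycle among (i)--(v).

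For the final assertion, strong metric regularity already supplies a single-valued localization $h$ of $F^{-1}$ around $(\yb,\xb)$ that is strictly continuous at $\yb$. Since $\gph F$ is strictly smooth at $(\xb,\yb)$ and swapping the two coordinate blocks is an orthogonal change of variables, $\gph F^{-1}$ is strictly smooth at $(\yb,\xb)$; as $\gph h$ coincides locally with $\gph F^{-1}$, the map $h$ is strictly proto-differentiable at $\yb$. Lemma~\ref{LemStrictDiff} then upgrades strict continuity plus strict proto-differentiability to strict differentiability of $h$ at $\yb$. The hard part will not be depth but bookkeeping: one must (a) exploit $D_*F=DF$ so that strong metric subregularity coincides with the derivative half of the strong metric regularity criterion, and (b) use Lemma~\ref{LemDimLipMan} to pin the Lipschitz dimension to $\dim L$, which is precisely what lets the one-sided conditions (A) or (B) in (ii), (iii) be promoted to full bijectivity by counting dimensions.
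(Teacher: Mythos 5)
Your proposal is correct and rests on the same ingredients as the paper's proof: the point-based criteria of Theorem~\ref{ThCharRegByDer}, the dimension/subspace information from Corollary~\ref{CorStrictlyProt} and Lemma~\ref{LemDimLipMan}, and elementary linear algebra on $L=\gph DF(\xb,\yb)$ and its adjoint $L^*$. The only difference is organizational --- you route all five statements through the single pivot ``$\pi|_L$ bijective'' instead of the paper's cycle of implications, and the paper phrases the same nonsingularity argument in terms of a basis matrix $(A,B)$ of $\gph D^*F(\xb,\yb)$ rather than the projection map --- so this is essentially the paper's proof.
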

\begin{proof}

ad (i)$\Rightarrow$(ii),(iii): If (i) holds then $F^{-1}$ has a single-valued Lipschitzian localization around $(\yb,\xb)$. Thus, the property that $F$ is graphically Lipschitzian of dimension $m$ can be certified with the bijection $\Phi(x,y)=(y,x)$, $(x,y)\in\R^n\times\R^m$ and strict differentiability of the localization follows from Corollary \ref{CorStrictlyProt}. Obviously, strong metric regularity implies both metric regularity and strong metric subregularity.

ad (ii)$\Leftrightarrow$(iv): Assume that (ii) holds. Since $F$ is graphically Lipschitzian of dimension $m$ around $(\xb,\yb)$ and is assumed to be strictly proto-differentiable at $\xb$ for $\yb$, we conclude from Corollary \ref{CorStrictlyProt}  that $\gph D^*F(\xb,\yb)$ is an $n$ dimensional subspace of $\R^m\times\R^n$.  Choose an $m\times n$ matrix $A$ and an $n\times n$ matrix $B$ such that $\gph D^*F(\xb,\yb)=\rge(A,B)$ and we claim that $B$ is nonsingular. Assume on the contrary that there is some $0\not=p\in\R^n$ such that $Bp=0$. Then $0=Bp\in D^*F(\xb,\yb)(Ap)$ and by the Mordukhovich criterion \eqref{EqMoCrit} we conclude that $Ap=0$. Hence $(Ap,Bp)=(0,0)$, $p\not=0$ and therefore $\dim \rge(A,B)<n$, a contradiction.
Hence, $B$ is nonsingular and  $\gph D^*F(\xb,\yb)=\rge(A,B)=\rge(AB^{-1},I_n)$.\\
Conversely, if (iv) holds it is easy to see that \eqref{EqMoCrit} is fulfilled and hence $F$ is metrically regular around $(\xb,\yb)$. Further, $\dim \gph D^*F(\xb,\yb)=n$ and we conclude from Corollary \ref{CorStrictlyProt} that $F$ is graphically Lipschitzian of dimension $m$ around $(\xb,\yb)$.

ad (iii)$\Leftrightarrow$(v): This can be shown similarly to the equivalence (ii)$\Leftrightarrow$(iv) by replacing the Mordukhovich criterion \eqref{EqMoCrit} with the Levy-Rockafellar criterion \eqref{EqLevRoCrit} criterion.

ad (iv)$\Leftrightarrow$(v): Since $F$ is strictly proto-differentiable, by Lemma \ref{LemStrictProto} both $\gph D^*F(\xb,\yb)$ and $\gph D_*F(\xb,\yb)$ are subspaces and
$\gph D^*F(\xb,\yb)=(\gph D_*F(\xb,\yb))^*$.  Further we have $\gph D_*F(\xb,\yb)=\gph DF(\xb,\yb)$ by \eqref{EqStrictProtPrim}. Thus, if (v) holds, then $\gph D_*F(\xb,\yb)^\perp=\gph DF(\xb,\yb)^\perp=\rge(I_n,-C^T)$ and $\gph D^*F(\xb,\yb)=S_{nm}\gph D_*F(\xb,\yb)^\perp=\rge(C^T,I_n)$. Conversely, if (iv) holds, then $\gph D_*F(\xb,\yb)=(\gph D^*F(\xb,\yb))^*=\gph(C,I_m)$.

ad (iv)$\Rightarrow$(i): As we have just proven, statement (iv) is equivalent with (v). Statement (iv) implies that the Mordukhovich criterion \eqref{EqMoCrit}  is fulfilled and (v) together with the equation  $\gph D_*F(\xb,\yb)=\gph DF(\xb,\yb)$ imply that the condition \eqref{EqStrictCrit} hold. Hence, $F$ is strongly metrically regular around $(\xb,\yb)$ by Theorem \ref{ThCharRegByDer}(iii).
\end{proof}

At the end of this section we want to specialize this result for mappings of the form
\begin{equation}\label{EqGE}F(x)=g(x)+G(x),\end{equation}
where $g:\R^n\to\R^m$ is single-valued and $G:\R^n\tto\R^m$ is set-valued. If $g$ is strictly differentiable at $\xb$, we obtain from \cite[Exercise 10.43]{RoWe98} for any $\yb\in F(\xb)$ the representations
\begin{gather}\label{EqGEStrictDer}\gph D_*F(\xb,\yb)=\begin{pmatrix}I_n&0\\\nabla g(\xb)& I_m\end{pmatrix}\gph D_*G(\xb,\yb-g(\xb)),\\
 \label{EqGECoderiv}\gph D^*F(\xb,\yb)=\begin{pmatrix}I_m&0\\\nabla g(\xb)^T& I_n\end{pmatrix}\gph D^*G(\xb,\yb-g(\xb)).\end{gather}
Further, it follows from the definition that $F$ is strictly proto-differentiable at $\xb$ for $\yb$ if and only if $G$ is strictly proto-differentiable at $\xb$ for $\yb-g(\xb)$.

\begin{corollary}Consider the mapping $F$ given by \eqref{EqGE} and let $(\xb,\yb)\in\gph F$. Assume that $g$ is strictly differentiable at $\xb$ and assume that $G$ has closed graph at $(\xb,\yb-g(\xb))$ and is strictly proto-differentiable at $\yb-g(\xb)$. Then the following properties are equivalent.
  \begin{enumerate}
  \item[(i)] $F$ is strongly metrically regular around $(\xb,\yb)$.
  \item[(ii)] $G$ is graphically Lipschitzian of dimension $m$ around $(\xb,\yb)$ and $F$ is metrically regular around $(\xb,\yb)$.
  \item[(iii)] $G$ is graphically Lipschitzian of dimension $m$  around $(\xb,\yb)$ and $F$ is strongly metrically subregular at $(\xb,\yb)$.
  \item[(iv)] There are matrices $A\in\R^{n\times m},\ B\in\R^{m\times m}$ such that $\gph D_*G(\xb,\yb-g(\xb))=\rge(A,B)$ and the $m\times m$ matrix $\nabla g(x)A+B$ is nonsingular.
  \item[(v)] There are matrices $\tilde A\in\R^{m\times n},\ \tilde B\in\R^{n\times n}$ such that $\gph D^*G(\xb,\yb-g(\xb))=\rge(\tilde A,\tilde B)$ and the $n\times n$ matrix $\nabla g(x)^T\tilde A+\tilde B$ is nonsingular.
  \end{enumerate}
 \end{corollary}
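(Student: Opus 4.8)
The plan is to reduce everything to Theorem~\ref{ThStrictProtoReg} applied to $F$ itself, translating each statement about $F$ into the corresponding statement about $G$ by means of the transformation formulas \eqref{EqGEStrictDer} and \eqref{EqGECoderiv}. First I would check that Theorem~\ref{ThStrictProtoReg} is applicable to $F$. Since $g$ is strictly differentiable at $\xb$ it is strictly continuous there, so the map $(x,y)\mapsto(x,y-g(x))$ is a local homeomorphism near $(\xb,\yb)$ carrying $\gph F$ onto $\gph G$ shifted; as $\gph G$ is closed at $(\xb,\yb-g(\xb))$, the graph $\gph F$ is locally closed at $(\xb,\yb)$. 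Moreover, by the remark preceding the corollary, $F$ is strictly proto-differentiable at $\xb$ for $\yb$ precisely because $G$ is strictly proto-differentiable at $\xb$ for $\yb-g(\xb)$. Hence the five equivalent conditions of Theorem~\ref{ThStrictProtoReg} hold for $F$, and it suffices to match each condition of the corollary with one of them.

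The key preliminary observation I would establish is that the dimensions of the graphical Lipschitz property of $F$ and $G$ coincide. Both mappings are strictly proto-differentiable, hence graphically Lipschitzian around their respective reference points by Corollary~\ref{CorStrictlyProt}, with dimension equal to $\dim\gph D_*F(\xb,\yb)$, respectively $\dim\gph D_*G(\xb,\yb-g(\xb))$. Since the matrix $M:=\left(\begin{smallmatrix}I_n&0\\\nabla g(\xb)&I_m\end{smallmatrix}\right)$ appearing in \eqref{EqGEStrictDer} is nonsingular, the identity $\gph D_*F(\xb,\yb)=M\,\gph D_*G(\xb,\yb-g(\xb))$ gives equal dimensions. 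Consequently $F$ is graphically Lipschitzian of dimension $m$ around $(\xb,\yb)$ if and only if $G$ is graphically Lipschitzian of dimension $m$ around $(\xb,\yb-g(\xb))$. Since condition~(i) is literally identical in both places, and metric regularity (respectively strong metric subregularity) is imposed on $F$ in both the corollary and the theorem, this dimension equivalence immediately identifies corollary conditions (i), (ii), (iii) with conditions (i), (ii), (iii) of Theorem~\ref{ThStrictProtoReg}.

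It then remains to translate the two matrix conditions. Using $\gph DF(\xb,\yb)=\gph D_*F(\xb,\yb)$ from strict proto-differentiability together with \eqref{EqGEStrictDer}, a representation $\gph D_*G(\xb,\yb-g(\xb))=\rge(A,B)$ yields $\gph DF(\xb,\yb)=\rge\big(A,\nabla g(\xb)A+B\big)$. I would then argue that this subspace has the form $\rge(C,I_m)$ for some $C$ exactly when the lower block $\nabla g(\xb)A+B$ is nonsingular: if it is, one takes $C:=A(\nabla g(\xb)A+B)^{-1}$, while conversely the form $\rge(C,I_m)$ forces the projection of the subspace onto the last $m$ coordinates to be all of $\R^m$, i.e. $\nabla g(\xb)A+B$ is nonsingular. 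This establishes the equivalence of corollary condition~(iv) with condition~(v) of Theorem~\ref{ThStrictProtoReg}. The equivalence of corollary condition~(v) with condition~(iv) of the theorem is entirely analogous, now using \eqref{EqGECoderiv} and a representation $\gph D^*G(\xb,\yb-g(\xb))=\rge(\tilde A,\tilde B)$ to obtain $\gph D^*F(\xb,\yb)=\rge\big(\tilde A,\nabla g(\xb)^T\tilde A+\tilde B\big)$, so that the required form $\rge(C^T,I_n)$ corresponds exactly to nonsingularity of $\nabla g(\xb)^T\tilde A+\tilde B$. The concluding assertion about the strictly differentiable single-valued localization of $F^{-1}$ around $(\yb,\xb)$ is then inherited verbatim from the final claim of Theorem~\ref{ThStrictProtoReg}.

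I expect the only genuinely delicate point to be the bookkeeping of matrix sizes: one must ensure that the generating matrices $(A,B)$ (respectively $(\tilde A,\tilde B)$) can be taken with $m$ (respectively $n$) columns forming a basis, so that the derived matrices $\nabla g(\xb)A+B$ and $\nabla g(\xb)^T\tilde A+\tilde B$ are genuinely square of the stated sizes. This is reconciled with the dimension count of the previous paragraph: the square nonsingularity requirement forces $\dim\gph D_*G(\xb,\yb-g(\xb))=m$, which is precisely the dimension $m$ of the graphical Lipschitz property already occurring in conditions (ii) and (iii), and one checks that the nonsingularity conditions are independent of the chosen basis representation since two such representations differ by right multiplication by a nonsingular matrix.
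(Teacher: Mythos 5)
Your proposal is correct and follows essentially the same route as the paper: both reduce the corollary to Theorem~\ref{ThStrictProtoReg} applied to $F$, using \eqref{EqGEStrictDer} and \eqref{EqGECoderiv} to match dimensions and to rewrite $\rge(A,\nabla g(\xb)A+B)$ as $\rge\big(A(\nabla g(\xb)A+B)^{-1},I_m\big)$ (and analogously for the coderivative). The paper's own proof is just a terser version of your argument, leaving the verification of applicability of the theorem and the basis-independence of the nonsingularity condition implicit.
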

 \begin{proof}
    Note that by \eqref{EqGEStrictDer}, \eqref{EqGECoderiv} we have $\dim\gph D_*F(\xb,\yb)=\dim\gph D_*G(\xb,\yb-g(\xb))$, $\dim\gph D^*F(\xb,\yb)=\dim\gph D^*G(\xb,\yb-g(\xb))$. Further, in the cases (iv) and (v) there hold
    \begin{gather*}
      \gph D_*F(\xb,\yb)=\rge(A,\nabla g(x)A+B)=\rge\big(A(\nabla g(x)A+B)^{-1},I_m\big),\\
      \gph D^*F(\xb,\yb)=\rge(\tilde A,\nabla g(x)^T\tilde A+\tilde B)=\rge\big(\tilde A(\nabla g(x)^T\tilde A+\tilde B)^{-1},I_n\big).
    \end{gather*}
    The assertion follows now from Theorem \ref{ThStrictProtoReg}.
 \end{proof}
 \begin{remark}
   In \cite{HaSa23} the authors proved equivalence of the different regularity properties for the special case when $G$ is the subgradient mapping of a prox-regular and subdifferentially continuous function $\varphi:\R^n\to(-\infty,\infty]$ under the additional assumption that $\nabla g(\xb)=\nabla g(\xb)^T$. The assumptions on $\varphi$ guarantee that $\partial\varphi$ is graphically Lipschitzian of dimension $n$, cf. \cite[Theorem 4.7]{PolRo96} and therefore the requirement $\nabla g(\xb)=\nabla g(\xb)^T$ is superfluous.
 \end{remark}

\section{Strict proto-differentiability of the subgradient mapping}

Given an extended real-valued function $\varphi:\R^n\to\oR:=(-\infty,\infty]$ and  a point $\xb\in \dom  \varphi:=\{x\in\R^n\mv  \varphi(x)<\infty\}$, the {\em regular subdifferential} of $\varphi$ at $\xb$ is given by
\[\widehat\partial  \varphi(\xb):=\Big\{x^*\in\R^n\mv\liminf_{x\to\xb}\frac{ \varphi(x)- \varphi(\xb)-\skalp{x^*,x-\xb}}{\norm{x-\xb}}\geq 0\Big\},\]
while the {\em limiting (Mordukhovich) subdifferential} is defined by
\[\partial  \varphi(\xb):=\{x^*\mv \exists x_k\attto{ \varphi} \xb, x_k^* \to x^* \mbox{ with }x_k^*\in\widehat \partial  \varphi(x_k)\ \forall k\},\]
where $x_k\attto{ \varphi} \xb$ denotes {\em $\varphi$-attentive} convergence, i.e., $x_k\attto{ \varphi} \xb \ :\Longleftrightarrow\ x_k\to \xb \mbox{ and }  \varphi(x_k)\to  \varphi(\xb).$

 An lsc function $\varphi:\R^n\to\oR$ is called {\em prox-regular} at $\xb$ for $\xba$
 if $\varphi$ is finite  at $\xb$ with $\xba\in\partial \varphi(\xb)$ and  there
exist $\epsilon> 0$ and $r\geq 0$ such that
\[ \varphi(x')\geq  \varphi(x)+\skalp{x^*,x'-x}-\frac r2 \norm{x'-x}^2\]
 whenever $x'\in \B_\epsilon(\xb)$ and $(x,x^*)\in \gph\partial \varphi\cap (\B_\epsilon(\xb)\times \B_\epsilon(\xba))$ with $\varphi(x)<\varphi(\xb)+\epsilon$.
 The function $\varphi$ is called {\em subdifferentially continuous} at $\xb$ for $\xba\in\partial f(\xb)$ if for every sequence $(x_k,x_k^*)\longsetto{\gph\partial\phi}(\xb,\xba)$ we have $\varphi(x_k)\to\varphi(\xb)$. In  absence of subdifferential continuity we cannot expect that $\gph\partial \varphi$ is locally closed and we should work with a $\varphi$-attentive $\epsilon$-localization around $(\xb,\xba)$, which is  the multifunction $\Ta:\R^m\tto\R^n$ defined by
 \[\gph\Ta=\{(x,x^*)\in\gph \partial \varphi\cap\B_\epsilon(\xb)\times\B_\epsilon(\xba)\mv \varphi(x)<\varphi(\xb)+\epsilon\}.\]
 It is well-known \cite[Theorem 4.7]{PolRo96}, that $\Ta$ is graphically Lipschitzian of dimension $n$ around $(\xb,\xba)$, whenever $\varphi$ is prox-regular at $\xb$ for $\xba$ with parameters $r\geq 0$ and $\epsilon>0$. As transformation mapping $\Phi$ one can take $\Phi(x,x^*):=(x+\lambda x^*,x)$ with $\lambda\in(0,\frac 1r)$ and the corresponding Lipschitz mapping $f$ is given by the {\em proximal mapping}
 \begin{equation*}%\label{EqProxMap}
 f(u):=P_\lambda(u):=(I+\lambda \Ta)^{-1}(u).
 \end{equation*}
 As an alternative transformation mapping one can take $\tilde \Phi(x,x^*)=(x+\lambda x^*,x^*)$ yielding the Lipschitzian mapping
 \begin{equation}\label{EqAltGradMorEnv}
   \tilde f(u):=(u-P_\lambda(u))/\lambda.
 \end{equation}
 Moreover $\gph \Ta$ is locally closed at $(\xb,\xba)$, cf. \cite[Lemma 3.4]{Gfr24b}. Having these properties in mind, straightforward application of Corollary \ref{CorStrictlyProt} yields characterizations of strict proto-differentiability of $\Ta$ by means of generalized derivatives of $\Ta$. However, the mapping $\Ta$ depends on the parameter $\epsilon$ of prox-regularity and we often do not know it a priori. But there exists the remedy of introducing $\varphi$-attentive generalized derivatives which differ from their ordinary counterparts by considering $\varphi$-attentive convergence defined as
 \begin{equation*}
(x_k,x_k^*)\attconv{\varphi}(\xb,\xba)\ :\Longleftrightarrow\ (x_k,x_k^*)\longsetto{\gph \partial \varphi}(\xb,\xba) \mbox{ and }\varphi(x_k)\to \varphi(\xb).
\end{equation*}

 \begin{definition}[cf. \cite{Gfr24b}]%\label{DefAttDer}
  Let $\varphi:\R^n\to\oR$ be a function and $(\xb,\xba)\in \gph \partial \varphi$.
  \begin{enumerate}
    \item The {\em $\varphi$-attentive tangent cone} to $\gph \partial \varphi$ at $(\xb,\xba)$ is given by
    \begin{equation*}%\label{EqAttTanCone}
      T^\varphi_{\gph \partial \varphi}(\xb,\xba):=\left\{(u,u^*)\mv \exists t_k\downarrow 0,\ (x_k,x_k^*)\attconv{\varphi}(\xb,\xba): (u,u^*)=\lim_{k\to\infty}\frac{(x_k,x_k^*)-(\xb,\xba)}{t_k}\right\}.
    \end{equation*}
    \item The {\em $\varphi$-attentive regular normal cone} to $\gph \partial \varphi$ at $(\xb,\xba)$ is defined as $\widehat N^\varphi_{\gph\partial \varphi}(\xb,\xba):=\big( T^\varphi_{\gph\partial \varphi}(\xb,\xba)\big)^\circ$ and the {\em $\varphi$-attentive limiting normal cone} to $\gph \partial \varphi$ at $(\xb,\xba)$ amounts to
        \[N^\varphi_{\gph\partial \varphi}(\xb,\xba):=\Limsup_{(x,x^*)\attconv{\varphi}(\xb,\xba)} \widehat N^\varphi_{\gph\partial \varphi}(x,x^*).\]
    \item The {\em $\varphi$-attentive graphical derivative $D_\varphi(\partial \varphi)(\xb,\xba)$} and the {\em $\varphi$-attentive limiting  coderivative $D_\varphi^*(\partial \varphi)(\xb,\xba)$} of $\gph \partial \varphi$ at $(\xb,\xba)$ are the set-valued mappings from $\R^n$ to the subsets of $\R^n$ given by
        \begin{gather*}
          \gph D_\varphi(\partial \varphi)(\xb,\xba):=T^\varphi_{\gph \partial \varphi}(\xb,\xba),\
          \gph  D_\varphi^*(\partial \varphi)(\xb,\xba) :=\{(u,u^*)\mv (u^*,-u)\in N^\varphi_{\gph\partial \varphi}(x,x^*)\}.
        \end{gather*}
    \item Let $\OO_{\partial \varphi}^\varphi$ denote the collection of all points $(x,x^*)\in\gph \partial \varphi$ such that $T^\varphi_{\gph \partial \varphi}(x,x^*)$ is an $n$-dimensional subspace. Then the {\em $\varphi$-attentive SC derivative} of $\partial \varphi$ at $(\xb,\xba)$ is defined by
        \[\Sp_\varphi(\partial \varphi)(\xb,\xba):=\{L\in\Z_{nn}\mv \exists (x_k,x_k^*)\subset \OO^\varphi_{\partial \varphi}, (x_k,x_k^*)\attconv{\varphi}(\xb,\xba): d_{\Z}\big(L,T^\varphi_{\gph \partial \varphi}(x_k,x_k^*)\big)\to0\}.\]
        Finally the {\em adjoint $\varphi$-attentive SC derivative} of $\partial \varphi$ at $(\xb,\xba)$ is defined by
        \[\Sp_\varphi^*(\partial \varphi)(\xb,\xba):=\{L^*\mv L\in \Sp_\varphi(\partial \varphi)(\xb,\xba)\}.\]
  \end{enumerate}
\end{definition}
If the lsc function $\varphi:\R^n\tto\oR$ is prox-regular at $\xb$ for $\xba\in\partial\varphi(\xb)$ with parameter $\epsilon>0$ and $\Ta$ denotes the $\varphi$-attentive $\epsilon$-localization of $\partial\varphi$ around $(\xb,\xba)$ then for every $(x,x^*)\in\gph \Ta$ there holds
\begin{gather*}T^\varphi_{\gph \partial \varphi}(x,x^*)=T_{\gph \Ta}(x,x^*),\ \widehat N^\varphi_{\gph \partial \varphi}(x,x^*)=\widehat N_{\gph \Ta}(x,x^*),\ N^\varphi_{\gph \partial \varphi}(x,x^*)=N_{\gph \Ta}(x,x^*),\\
  D_\varphi(\partial \varphi)(x,x^*)=D\Ta(x,x^*),\ D_\varphi^*(\partial \varphi)(x,x^*)=D^*\Ta(x,x^*),\\
  \emptyset\not=\Sp_\varphi(\partial \varphi)(x,x^*)=\Sp\Ta(x,x^*)=\Sp_\varphi^*(\partial \varphi)(x,x^*)=\Sp^*\Ta(x,x^*),
\end{gather*}
cf. \cite[Proposition 3.5]{Gfr24b}.
\if{
Moreover. for every $L\in \Sp_\varphi(\partial \varphi)(x,x^*)$ there holds $L=L^*$ and there are unique symmetric $n\times n$ matrices $P,W$ such that
\begin{equation}
  L=\rge(P,W),\ P^2=P,\ W(I-P)=I-P.
\end{equation}
}\fi
Putting everything together we obtain from Corollary \ref{CorStrictlyProt} the following statement.
  \begin{corollary}\label{CorStrictlyProtSubdiff}
  Assume that the lsc function $\varphi:\R^n\tto\R^n$ is prox-regular at $\xb$ for $\xba\in\partial \varphi(\xb)$ with parameter $\epsilon>0$ and let $\Ta$ denote the $\varphi$-attentive $\epsilon$-localization of $\partial\varphi$ around $(\xb,\xba)$. Then the following statements are equivalent:
  \begin{enumerate}
    \item[(i)] $\Ta$  is strictly proto-differentiable at $\xb$ for $\xba$.
    \item[(ii)] $\Ta$ is graphically strictly differentiable at $(\xb,\xba)$.
    \item[(iii)]  $\gph D_\varphi^*(\partial\varphi)(\xb,\xba)$ is a subspace.
    \item[(iv)] $\Ta$ is graphically regular at $(\xb,\xba)$, i.e., $\widehat N_{\gph\partial\varphi}^\varphi(\xb,\xba)= N_{\gph\partial\varphi}^\varphi(\xb,\xba)$.
    \item[(v)] $\Sp_\varphi (\partial\varphi)(\xb,\xba)$ is a singleton.
  \end{enumerate}
  In every case there holds  $\Sp_\varphi \partial \varphi(\xb,\xba)=\Sp_\varphi^* \partial \varphi(\xb,\xba)= \{\gph D_\varphi(\partial\varphi)(\xb,\xba)\}=\{\gph D^*_\varphi(\partial \varphi)(\xb,\xba)\}$.\\
  If, in addition, $\varphi$ is subdifferentially continuous at $\xb$ for $\xba$, then $\Ta$ can be replaced by $\partial\varphi$.
\end{corollary}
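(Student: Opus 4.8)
The plan is to reduce the whole statement to Corollary \ref{CorStrictlyProt} applied to the localization $\Ta$, using the dictionary that identifies the $\varphi$-attentive objects with the ordinary generalized derivatives of $\Ta$. The two enabling facts are that $\Ta$ is graphically Lipschitzian of dimension $n$ around $(\xb,\xba)$ by \cite[Theorem 4.7]{PolRo96} and that $\gph\Ta$ is locally closed at $(\xb,\xba)$ by \cite[Lemma 3.4]{Gfr24b}. Hence all hypotheses of Corollary \ref{CorStrictlyProt} are met for $F=\Ta$ with $m=d=n$, and in particular the clause ``$F$ is graphically Lipschitzian around $(\xb,\yb)$'' occurring in each of statements (iv)--(viii) there is automatically satisfied and can be dropped.

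First I would transcribe the equivalences. Statement (i) is literally (i) of Corollary \ref{CorStrictlyProt} and (ii) is literally (ii) there. For the remaining items I would invoke the identities $D_\varphi^*(\partial\varphi)(\xb,\xba)=D^*\Ta(\xb,\xba)$, $\widehat N^\varphi_{\gph\partial\varphi}=\widehat N_{\gph\Ta}$, $N^\varphi_{\gph\partial\varphi}=N_{\gph\Ta}$, and $\Sp_\varphi(\partial\varphi)=\Sp\Ta$ recorded just before the corollary (from \cite[Proposition 3.5]{Gfr24b}), together with the fact that a mapping graphically Lipschitzian of dimension $n$ satisfies $\tilde\Sp\Ta=\Sp\Ta$. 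With these substitutions, statement (iii) becomes exactly (v) of Corollary \ref{CorStrictlyProt} (the coderivative graph is a subspace), statement (iv) becomes (vi) there (graphical regularity), and statement (v) becomes (vii) there (the SC derivative is a singleton). Thus (i)--(v) are pairwise equivalent.

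Next I would read off the ``in every case'' equalities. Corollary \ref{CorStrictlyProt} gives $\tilde\Sp\Ta(\xb,\xba)=\{\gph D\Ta(\xb,\xba)\}$ and $\tilde\Sp^*\Ta(\xb,\xba)=\{\gph D^*\Ta(\xb,\xba)\}$; translating through the dictionary yields $\Sp_\varphi(\partial\varphi)(\xb,\xba)=\{\gph D_\varphi(\partial\varphi)(\xb,\xba)\}$ and $\Sp_\varphi^*(\partial\varphi)(\xb,\xba)=\{\gph D_\varphi^*(\partial\varphi)(\xb,\xba)\}$. The one point special to the subgradient setting is the coincidence of these two singletons, for which I would use the symmetry $\Sp_\varphi(\partial\varphi)(\xb,\xba)=\Sp_\varphi^*(\partial\varphi)(\xb,\xba)$ from \cite[Proposition 3.5]{Gfr24b}: it forces the unique subspace $L$ to satisfy $L=L^*$, whence $\gph D_\varphi(\partial\varphi)(\xb,\xba)=\gph D_\varphi^*(\partial\varphi)(\xb,\xba)$. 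This self-adjointness is the only non-bookkeeping ingredient, and it is the step where I expect any genuine care to be required. Finally, for the subdifferentially continuous case I would note that $\varphi$-attentive convergence to $(\xb,\xba)$ collapses to ordinary convergence along $\gph\partial\varphi$, so $\gph\partial\varphi$ is locally closed and coincides locally with $\gph\Ta$; consequently every $\varphi$-attentive object at $(\xb,\xba)$ agrees with its plain counterpart for $\partial\varphi$, and $\Ta$ may be replaced by $\partial\varphi$ throughout.
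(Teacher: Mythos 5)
Your proposal is correct and follows essentially the same route as the paper, which simply derives the corollary from Corollary \ref{CorStrictlyProt} applied to $\Ta$ (graphically Lipschitzian of dimension $n$ with locally closed graph) via the dictionary of \cite[Proposition 3.5]{Gfr24b}, including the self-adjointness $\Sp_\varphi(\partial\varphi)=\Sp_\varphi^*(\partial\varphi)$ that yields the coincidence of the two singletons. The only difference is that you spell out the bookkeeping that the paper compresses into ``putting everything together.''
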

Strict proto-differentiability of $\Ta$ for prox-regular functions $\varphi$ is equivalent with the so-called property of strict twice epi-differentiability of $\varphi$, cf. \cite[Theorem 6.1]{PolRo96}. We will now state another second-order relation between  $\varphi$ and $\Ta$.
\begin{theorem}Assume that the lsc function $\varphi:\R^n\tto\R^n$ is prox-regular at $\xb$ for $\xba\in\partial \varphi(\xb)$ with parameters $r\geq 0$ and $\epsilon>0$ and let $\Ta$ denote the $\varphi$-attentive $\epsilon$-localization of $\partial\varphi$ around $(\xb,\xba)$.
\begin{enumerate}
  \item[(i)] If  $\Ta$ is either proto-differentiable at $\xb$ for $\xba$ or \ssstar at $(\xb,\xba)$ then
  \begin{equation}\label{EqTrapRule1}
    \lim_{(x,x^*)\attconv{\varphi}(\xb,\xba)}\frac{\varphi(x)-\varphi(\xb)-\frac 12\skalp{x^*+\xba,x-\xb}}{\norm{(x,x^*)-(\xb,\xba)}^2}=0.
  \end{equation}
   \item[(ii)] If  $\Ta$ is strictly proto-differentiable at $\xb$ for $\xba$ then
  \begin{equation}\label{EqTrapRule2}
    \lim_{\AT{(x,x^*),(y,y^*)\attconv{\varphi}(\xb,\xba)}{(x,x^*)\not=(y,y^*)}}\frac{\varphi(y)-\varphi(x)-\frac 12\skalp{y^*+x^*,y-x}}{\norm{(y,y^*)-(x,x^*)}^2}=0.
  \end{equation}
\end{enumerate}
\end{theorem}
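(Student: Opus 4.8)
The plan is to reduce both statements to estimates for the local Moreau envelope, which is a $C^{1,1}$ function, and then to read them off as trapezoidal quadrature errors for that smooth function. Fix $\lambda\in(0,\frac1r)$, write $g:=e_\lambda\varphi$ for the local Moreau envelope and $h:=\tilde f=\nabla e_\lambda\varphi$, so that (by the standard Moreau-envelope theory for prox-regular functions) $h$ is Lipschitz near $\bar u:=\xb+\lambda\xba$ and $g$ is $C^{1,1}$ with $\nabla g=h$. For $(x,x^*)\in\gph\Ta$ and $u:=x+\lambda x^*$ one has $x=P_\lambda(u)$, $x^*=h(u)$ and the exact identity $\varphi(x)=g(u)-\frac\lambda2\norm{x^*}^2$. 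Substituting this together with $x-\xb=(u-\bar u)-\lambda(x^*-\xba)$ into the numerators of \eqref{EqTrapRule1} and \eqref{EqTrapRule2}, the quadratic terms $\frac\lambda2(\norm{x^*}^2-\norm{\xba}^2)$ cancel and I obtain the exact reductions
\[\varphi(x)-\varphi(\xb)-\tfrac12\skalp{x^*+\xba,x-\xb}=g(u)-g(\bar u)-\tfrac12\skalp{h(u)+h(\bar u),u-\bar u}\]
and, with $v:=y+\lambda y^*$,
\[\varphi(y)-\varphi(x)-\tfrac12\skalp{y^*+x^*,y-x}=g(v)-g(u)-\tfrac12\skalp{h(v)+h(u),v-u}.\]
Since the linear bijection $\tilde\Phi(x,x^*)=(x+\lambda x^*,x^*)$ turns $\gph\Ta$ into $\gph h$, and linear changes of coordinates preserve geometric derivability, strict smoothness and the semismooth$^*$ property, proto-differentiability (resp.\ strict proto-differentiability, resp.\ the semismooth$^*$ property) of $\Ta$ is equivalent to semidifferentiability (resp.\ strict differentiability, resp.\ the semismooth$^*$ property) of $h$ at $\bar u$; the first two come from Corollary \ref{CorStrictlyProt} applied with the representation $(\tilde\Phi,h)$. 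Because $P_\lambda$ and $h$ are Lipschitz and $\tilde\Phi$ is a linear bijection, $\norm{(x,x^*)-(\xb,\xba)}$ is comparable to $\norm{u-\bar u}$ and $\norm{(y,y^*)-(x,x^*)}$ to $\norm{v-u}$, so it suffices to show the two right-hand sides are $o(\norm{u-\bar u}^2)$ and $o(\norm{v-u}^2)$, respectively.

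For part (i) I work with the one-point trapezoidal error $E(u):=g(u)-g(\bar u)-\frac12\skalp{h(u)+h(\bar u),u-\bar u}$; subtracting the affine part of $g$ leaves $E$ unchanged, so I may assume $h(\bar u)=0$. Writing $w:=u-\bar u$ and $p(s):=\skalp{h(\bar u+sw),w}$ for $s\in[0,1]$, the fundamental theorem of calculus yields the clean form $E(u)=\int_0^1 p(s)\,ds-\frac12 p(1)$, i.e.\ $E(u)$ is exactly the trapezoidal quadrature error of the Lipschitz function $p$ on $[0,1]$ with $p(0)=0$. If $\Ta$ is proto-differentiable, then $h$ is semidifferentiable, $h(\bar u+w)=\psi(w)+o(\norm{w})$ with $\psi$ positively homogeneous, and inserting this into $g(u)-g(\bar u)=\int_0^1\skalp{h(\bar u+sw),w}\,ds$ and into $\frac12\skalp{h(u),w}$ shows both equal $\frac12\skalp{\psi(w),w}+o(\norm{w}^2)$, whence $E(u)=o(\norm{w}^2)$. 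If instead $\Ta$ is semismooth$^*$, I invoke Proposition \ref{PropNewtondiff}: for every $\epsilon'>0$ and $u$ near $\bar u$ one has $\norm{h(\bar u+sw)-s\,C\,w}\le\epsilon' s\norm{w}$ for all $C\in\co\onabla h(\bar u+sw)$; choosing $C=\nabla h(\bar u+sw)$ where $h$ is differentiable gives $p'(s)=\frac1s p(s)+\rho(s)$ a.e.\ with $|\rho(s)|\le\epsilon'\norm{w}^2$. Since $\frac{d}{ds}\big(p(s)/s\big)=\rho(s)/s$, integrating and interchanging the order of integration produces the identity $E(u)=-\frac12\int_0^1\tau\rho(\tau)\,d\tau$, so $|E(u)|\le\frac{\epsilon'}4\norm{w}^2$; as $\epsilon'$ is arbitrary this is $o(\norm{w}^2)$.

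For part (ii) I estimate the two-point trapezoidal error $E_2(u,v):=g(v)-g(u)-\frac12\skalp{h(v)+h(u),v-u}$. Strict proto-differentiability of $\Ta$ makes $h$ strictly differentiable at $\bar u$ with (symmetric) derivative $A$, so for every $\eta>0$ there is $\delta>0$ with $\norm{h(u')-h(u'')-A(u'-u'')}\le\eta\norm{u'-u''}$ for $u',u''\in\B_\delta(\bar u)$. Writing $w:=v-u$ and using $g(v)-g(u)=\int_0^1\skalp{h(u+tw),w}\,dt$, I replace $h(u+tw)-\frac12(h(u)+h(v))$ by $(t-\frac12)Aw$ up to errors of norm at most $\eta\norm{w}$; the main term integrates to $\skalp{Aw,w}\int_0^1(t-\frac12)\,dt=0$, and the errors contribute at most $\eta\norm{w}^2$. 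Hence $|E_2(u,v)|\le\eta\norm{v-u}^2$, giving the claim.

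The genuinely new idea, and the main obstacle, is the exact reduction to the Moreau envelope: once one recognizes that the nonsmooth trapezoidal expression for $\varphi$ along $\gph\Ta$ coincides with the ordinary trapezoidal expression for the $C^{1,1}$ function $g=e_\lambda\varphi$ along $\gph\nabla g$, the three regularity hypotheses translate cleanly into semidifferentiability, the semismooth$^*$ property, and strict differentiability of $\nabla g$. Among the resulting quadrature estimates, the semismooth$^*$ case of part (i) is the delicate one, since prox-regularity and semismoothness only control $h$ in the single direction $w$ rather than the full Jacobian; it is precisely the differential relation $p'(s)=p(s)/s+\rho(s)$ together with the integration by parts that upgrades the a priori $O(\norm{w}^2)$ bound to the required $o(\norm{w}^2)$.
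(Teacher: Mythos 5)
Your proposal is correct and follows essentially the same route as the paper: reduce the nonsmooth trapezoidal expression to the corresponding expression for the Moreau envelope $e_\lambda$ via the exact identity $e_\lambda(u)=\varphi(x)+\frac\lambda2\norm{x^*}^2$ with $u=x+\lambda x^*$, then prove the one-point and two-point quadrature estimates for $\nabla e_\lambda$ using, respectively, semidifferentiability, the semismooth$^*$ characterization of Proposition \ref{PropNewtondiff} combined with integration by parts, and strict differentiability. The only differences are cosmetic (your normalization $h(\bar u)=0$ and the phrasing of the semismooth$^*$ step as the relation $p'(s)=p(s)/s+\rho(s)$, which after integration by parts yields exactly the paper's identity and the bound $\tfrac{\epsilon'}{4}\norm{w}^2$).
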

\begin{proof}
  Since we are interested only in local properties of $\varphi$ and $\partial\varphi$, we may add to $\varphi$ the indicator function of some compact neighborhood of $\xb$ so that
  \[\varphi(x)\geq \varphi(\xb)+\skalp{\xba, x-x^*}-\frac r2\norm{x-\xb}^2,\ x\in\R^n.\]
  Consider the {\em Moreau envelope}
  \[e_\lambda(u):=\min_x\{\varphi(x)+\frac1{2\lambda}\norm{x-u}^2\}\]
  for fixed $\lambda\in (0,\frac 1r)$. It is well known \cite[Theorem 4.4]{PolRo96} that there exists an open convex neighborhood $U$ of $\bar u=\xb+\lambda \xba$ such that the proximal mapping $P_\lambda$ is single-valued and Lipschitz on $U$ and $e_\lambda$ is continuously differentiable on $U$ with Lipschitzian gradient
  $\nabla e_\lambda(u)=(u-P_\lambda(u))/\lambda$.

  ad (i): We will show in a first step that
  \begin{equation}\label{EqTrapRuleMorEnv1}\lim_{u\to\bar u}\frac{e_\lambda(u)-e_\lambda(\bar u)-\frac 12(\nabla e_\lambda(u)+\nabla e_\lambda(\bar u))(u-\bar u)}{\norm{u-\bar u}^2}=0.
 \end{equation}
Consider for arbitrary $u\in U$ the function $\psi_u:[0,1]\to\R$ given by $\psi_u(t)=\nabla e_\lambda(\ub+t(u-\ub))(u-\ub)$. Then
  \begin{equation}\label{EqFuncDiffInt}e_\lambda(u)-e_\lambda(\ub)-\nabla e_\lambda(\ub)(u-\ub)=\int_0^1\psi_u(t)-\psi_u(0){\rm d}t.\end{equation}

  Assume first that $\Ta$ is proto-differentiable at $\xb$ for $\xba$. It follows from \cite[Exrecise 13.45]{RoWe98} that $\nabla e_\lambda$ is semidifferentiable at $\bar u$, which means that the directional derivatives
  \[d(h):=\lim_{h'\to h,\ t\downarrow0}\frac{\nabla e_\lambda(\bar u+th')-\nabla e_\lambda(\bar u)}t,\
   h\in \R^n\]
  exist. Consider arbitrary $\epsilon>0$ and choose $\delta>0$ such that $\B_\delta(\bar u)\subset U$ and for every $h\in\R^n$ with $\norm{h}=1$ there holds
  \begin{equation}\label{EqAuxProto1}\Norm{\tau d(h)- (\nabla e_\lambda(\bar u+\tau h)-\nabla e_\lambda(\bar u))}\leq\tau\epsilon, \tau\in [0,\delta].
  \end{equation}
  For arbitrarily fixed $u\in \B_\delta(\ub)$, $u\not=\ub$ and $t\in [0,1]$ we have
  \begin{align*}
  \norm{\psi_u(t)-\psi_u(0)&-t(\psi_u(1)-\psi_u(0))}\leq  \Norm{\psi_u(t)-\psi_u(0)-t\norm{u-\ub}d\left(\frac{u-\ub}{\norm{u-\ub}}\right)(u-\ub)} \\&+t\Norm{\norm{u-\ub}d\left(\frac{u-\ub}{\norm{u-\ub}}\right)(u-\ub)-(\psi_u(1)-\psi_u(0))}
  \leq 2t\epsilon\norm{u-\ub}^2,
  \end{align*}
  where we have used the Cauchy-Schwarz inequality and \eqref{EqAuxProto1} with $h=(u-\ub)/\norm{u-\ub}$, $\tau=t\norm{u-\ub}$ and $\tau=\norm{u-\ub}$. Thus, by virtue of \eqref{EqFuncDiffInt} we obtain that
  \begin{align*}&\vert e_\lambda(u)-e_\lambda(\ub)-\nabla e_\lambda(\ub)(u-\ub)-\frac{\nabla e_\lambda(u)-\nabla e_\lambda(\ub)}2(u-\ub)\vert=\vert \int_0^1\psi_u(t)-\psi_u(0){\rm d}t -\frac {\psi_u(1)-\psi_u(0)}2\vert\\
  &=\vert \int_0^1\psi_u(t)-\psi_u(0)-t(\psi_u(1)-\psi_u(0)){\rm d}t\vert \leq \int_0^12t\epsilon\norm{u-\ub}^2{\rm d}t=\epsilon\norm{u-\ub}^2\end{align*}
  and we conclude that \eqref{EqTrapRuleMorEnv1} holds.

  Now assume that $\Ta$ is \ssstar at $(\xb,\xba)$. By taking into account \eqref{EqAltGradMorEnv}, it follows from \cite[Proposition 4.2]{GfrOut24a} that $\nabla e_\lambda$ is \ssstar at $\bar u$. By Proposition \ref{PropNewtondiff}, for arbitrarily fixed $\epsilon>0$ we can find $\delta>0$ such that $\B_\delta(\ub)\subset U$ and
  \[\norm{\nabla e_\lambda(u')-\nabla e_\lambda(\bar u)-A(u'-\bar u)}\leq \epsilon\norm{u'-\ub}, u'\in \B_\delta(\ub),\ A\in\co\onabla \nabla e_\lambda(u').\]
   For fixed $u\in\B_\delta(\ub)$, by Rademacher's theorem, the set $[0,1]\setminus D_{\psi_u}$ has Lebesgue measure $0$ and for every $t\in D_{\psi_u}$  we have $\psi_u'(t)=\skalp{A(u-\bar u),u-\bar u}$ for some $A\in \co\onabla \nabla e_\lambda(\bar u+t(u-\bar u))$ by \cite[Theorems 2.3.10, 2.6.6]{Cla83} implying $\vert \psi_u(t)-\psi_u(0)-t\psi_u'(t)\vert\leq \epsilon t\norm{u-\bar u}^2$. Integration by parts yield \[\int_0^1t\psi_u'(t){\rm d}t=1\cdot\psi_u(1)-0\cdot\psi_u(0)-\int_0^1\psi_u(t){\rm d}t=\nabla e_\lambda(u)(u-\ub)-(e_\lambda(u)-e_\lambda(\ub))\]
   and, together with \eqref{EqFuncDiffInt} we obtain that
   \begin{align*}&\lefteqn{\vert \big(e_\lambda(u)-e_\lambda(\ub)-\nabla e_\lambda(\ub)(u-\ub)\big)+\big(e_\lambda(u)-e_\lambda(\ub)-\nabla e_\lambda(u)(u-\ub)\big)\vert}\\
   &=\vert\int_0^1\psi_u(t)-\psi_u(0)-t\psi_u'(t){\rm d}t\vert\leq\int_0^1t\epsilon\norm{u-\ub}^2{\rm d}t=\frac 12 \epsilon\norm{u-\ub}^2
   \end{align*}
implying \eqref{EqTrapRuleMorEnv1} also in this case.

 For every pair $(x,x^*)\in\gph \Ta$ sufficiently close to $(\xb,\xba)$ we have $u:=x+\lambda x^*\in U$, $(x,x^*)=(P_\lambda(u),\nabla e_\lambda(u))$ and $e_\lambda(u)= \varphi(x)+\frac \lambda2\norm{x^*}^2$. Since $\norm{u-\bar u}\leq \norm{x-\bar x}+\lambda\norm{x^*-\xba}$, \eqref{EqTrapRule1} follows from \eqref{EqTrapRuleMorEnv1} by using the identity
  \begin{align*}\lefteqn{e_\lambda(u)-e_\lambda(\bar u)-\frac{\nabla e_\lambda(u)+\nabla e_\lambda(\bar u)}2(u-\bar u)}\\
  &=   \varphi(x)+\frac \lambda 2\norm{x^*}^2-\varphi(\xb)-\frac \lambda 2\norm{\xba}^2-\frac {x^*+\xba}2(x+\lambda x^*-\xb-\lambda\xba)=\varphi(x)-\varphi(\xb)-\frac{x^*+\xba}2(x-\xb).\end{align*}

ad (ii): By \cite[Theorems 4.1, 4.2]{PolRo96b}, the mapping $\Ta$ is strictly proto-differentiable at $\xb$ for $\xba$ if and only if $\nabla e_\lambda$ is strictly differentiable at $\ub$. For arbitrary $\epsilon>0$ consider $\delta>0$ such that $\B_\delta(\ub)\subset U$ and $\norm{\nabla e_\lambda(u')-\nabla e_\lambda(u)-(u'-u)^T\nabla^2 e_\lambda(\ub)}\leq\epsilon \norm{u'-u}$, $u',u\in \B_\delta(\ub)$. Then for arbitrarily fixed $u',u\in\B_\delta(\ub)$ we have
\begin{align*}&\vert e_\lambda(u')-e_\lambda(u)-\nabla e_\lambda(u)(u'-u)-\frac 12(\nabla e_\lambda(u')-\nabla e_\lambda(u))(u'-u)\vert\\
&=\vert\int_0^1(\nabla e_\lambda(u+t(u'-u))-\nabla e_\lambda(u))(u'-u){\rm d}t-\frac 12(\nabla e_\lambda(u')-\nabla e_\lambda(u))(u'-u)\vert\\
&= \vert\int_0^1\big(\nabla e_\lambda(u+t(u'-u))-\nabla e_\lambda(u)-t(u'-u)^T\nabla^2 e_\lambda(\ub)\big)(u'-u){\rm d}t\\
&\qquad-\frac 12\big(\nabla e_\lambda(u')-\nabla e_\lambda(u)-(u'-u)^T\nabla^2 e_\lambda(\ub)\big)(u'-u)\vert\\
&\leq \int_0^1\epsilon t\norm{u'-u}^2{\rm d}t+\frac 12\epsilon\norm{u'-u}^2=\epsilon\norm{u'-u}^2
\end{align*}
implying
\[\lim_{u',u\to \ub, u'\not=0}\frac{e_\lambda(u')-e_\lambda(u)-\frac 12(\nabla e_\lambda(u')+\nabla e_\lambda(u))(u'-u)}{\norm{u'-u}^2}=0.\]
Now similar arguments as we used in (i) verify \eqref{EqTrapRule2}
\end{proof}
\begin{remark}
  We call formulas \eqref{EqTrapRule1}, \eqref{EqTrapRule2} {\em trapezoidal rule} because for differentiable $\varphi$ we have
  \[\varphi(y)-\varphi(x)=\int_0^1\nabla\varphi(x+t(y-x))(y-x){\rm d}t\approx \frac 12(\nabla \varphi(x)+\nabla\varphi(y))(y-x)\]
  by the well-known trapezoidal rule of numerical integration, see, e.g. \cite{Atk89}.
\end{remark}
\begin{remark}
  After submitting this paper, Rockafellar \cite{Ro25} published a manuscript where he introduced the so-called {\em generalized quadratic bundle} ${\rm quad\,} \varphi(\xb,\xba)$ for an lsc function $\varphi:\R^n\to\oR$ which is prox-regular at $\xb$ for $\xba\in \partial f(\xb)$. Since it can be shown that $\Sp_\varphi (\partial\varphi)(\xb,\xba)=\{\gph \partial q\mv q\in {\rm quad\,}\varphi(\xb,\xba)\}$, the list of equivalences in Corollary \ref{CorStrictlyProtSubdiff} can be extended by the statement that ${\rm quad\,} \varphi(\xb,\xba)$ is a singleton.
\end{remark}

\section*{Declarations}

\noindent{\bf Conflict of interest.} The author has no competing interests to declare that are relevant to the content of this article.

\noindent{\bf Data availability. }
Data sharing is not applicable to this article as no datasets have been generated or analysed during the current study.

\end{document}